\long\def\symbolfootnote[#1]#2{\begingroup%
\def\thefootnote{\fnsymbol{footnote}}\footnote[#1]{#2}\endgroup}
\qed\vspace{5pt}}
\newtheoremstyle{lause}
{5pt}
{5pt}
{\slshape}
{\parindent}
{\bfseries}
{.}
{.5em}
{}
\theoremstyle{lause}
\newtheoremstyle{maaritelma}
{5pt}
{5pt}
{\rmfamily}
{\parindent}
{\bfseries}
{.}
{.5em}
{}
\theoremstyle{maaritelma}
\newtheoremstyle{lause}
{5pt}
{5pt}
{\slshape}
{\parindent}
{\bfseries}
{.}
{.5em}
{}
\theoremstyle{lause}
\newtheorem{theorem}{Theorem}[section]
\newtheorem{lemma}[theorem]{Lemma}
\newtheorem{problem}[theorem]{Problem}
\newtheoremstyle{maaritelma}
{5pt}
{5pt}
{\rmfamily}
{\parindent}
{\bfseries}
{.}
{.5em}
{}
\theoremstyle{maaritelma}
\newtheorem{definition}[theorem]{Definition}
\newtheorem{example}[theorem]{Example}
\newtheorem{remark}[theorem]{Remark}
\numberwithin{equation}{section}
\begin{document}

\thispagestyle{empty}

\begin{center}

{\large{\textbf{On the theory of balayage on locally compact spaces}}}

\vspace{18pt}

\textbf{Natalia Zorii}

\vspace{18pt}

\emph{Dedicated to Professor Wolfgang L.\ Wendland on the occasion of his 85th birthday}\vspace{8pt}

\footnotesize{\address{Institute of Mathematics, Academy of Sciences
of Ukraine, Tereshchenkivska~3, 01601,
Kyiv-4, Ukraine\\
natalia.zorii@gmail.com }}

\end{center}

\vspace{12pt}

{\footnotesize{\textbf{Abstract.} The paper deals with the theory of balayage of Radon measures $\mu$ of finite energy on a locally compact space $X$ with respect to a consistent kernel $\kappa$ satisfying the domination principle. Such theory is now specified for the case where the topology on $X$ has a countable base, while any $f\in C_0(X)$, a continuous function on $X$ of compact support, can be approximated in the inductive limit topology on the space $C_0(X)$ by potentials $\kappa\lambda:=\int\kappa(\cdot,y)\,d\lambda(y)$ of measures $\lambda$ of finite energy. In particular, we show that then the inner balayage can always be reduced to balayage to Borel sets. In more details, for arbitrary $A\subset X$, there exists a $K_\sigma$-set $A_0\subset A$ such that \[\mu^A=\mu^{A_0}=\mu^{*A_0}\text{ \ for all $\mu$,}\]
$\mu^A$ and $\mu^{*A}$ denoting the inner and the outer balayage of $\mu$ to $A$, respectively. Furthermore, $\mu^A$ is now uniquely determined by the symmetry relation $\int\kappa\mu^A\,d\lambda=\int\kappa\lambda^A\,d\mu$, $\lambda$ ranging over a certain countable family of measures depending on $X$ and $\kappa$ only.
As an application of these theorems, we analyze the convergence of inner and outer swept measures and their potentials.
The results obtained do hold for many interesting kernels in classical and modern potential theory on $\mathbb R^n$, $n\geqslant2$.}}
\symbolfootnote[0]{\quad 2010 Mathematics Subject Classification: Primary 31C15.}
\symbolfootnote[0]{\quad Key words: Radon measures on a locally compact space, consistent kernel, potential, energy, domination principle, capacitability, inner and outer balayage.}

\vspace{6pt}

\markboth{\emph{Natalia Zorii}} {\emph{On the theory of balayage on locally compact spaces}}

\section{Introduction. Main results}\label{sec-intr} This paper deals with a theory of balayage of (real-valued Radon) measures of finite energy on a locally compact (Hausdorff) space $X$ in the setting of potentials with respect to a symmetric, lower semicontinuous (l.s.c.)\ {\it kernel\/} $\kappa:X\times X\to[0,\infty]$. Such a theory has recently been developed in the author's work \cite{Z-arx1} (see Sect.~\ref{i1} for a brief exposition of \cite{Z-arx1}). Being mainly concerned with the study of inner balayage, the results established in \cite{Z-arx1} are not covered by the balayage theory in the setting of balayage spaces \cite{BH} or $H$-cones \cite{BBC}, or the theory developed in \cite{Fu5}, the last being based on the investigation of outer balayage to quasiclosed sets.

The theory established in \cite{Z-arx1} generalizes H.~Cartan's theory of Newtonian inner balayage \cite{Ca2} to a kernel $\kappa$ on $X$ satisfying the energy, consistency, and domination principles (for definitions, see Sect.~\ref{sec}).
{\it Throughout the present paper, unless explicitly stated otherwise, we shall always assume these three principles to hold.}

Denote by $\mathfrak M=\mathfrak M(X)$ the linear space of all (signed) measures $\nu$ on $X$, equipped with the {\it vague\/} ($=${\it weak\,$^*$}) topology of pointwise convergence on the class $C_0=C_0(X)$ of all continuous functions $f:X\to\mathbb R$ of compact support, and by $\mathfrak M^+=\mathfrak M^+(X)$ the cone of all positive $\nu\in\mathfrak M$. The kernel $\kappa$ being strictly positive definite, all $\nu\in\mathfrak M$ of finite {\it energy\/} $\kappa(\nu,\nu):=\int\kappa\,d(\nu\otimes\nu)$ form a pre-Hil\-bert space $\mathcal E=\mathcal E(X)$ with the inner product $(\mu,\nu):=\kappa(\mu,\nu):=\int\kappa\,d(\mu\otimes\nu)$. The (strictly positive definite) kernel $\kappa$ being consistent, hence perfect \cite{F1}, the cone $\mathcal E^+=\mathcal E^+(X):=\mathcal E\cap\mathfrak M^+$ is complete in the {\it strong\/} topology, determined by the energy norm $\|\nu\|:=\sqrt{\kappa(\nu,\nu)}$, and the strong topology on $\mathcal E^+$ is finer than the vague topology (see Sect.~\ref{sec} for more details).

\subsection{Basic facts of the theory of inner balayage}\label{i1}
Given a measure $\mu\in\mathcal E^+$ and a set $A\subset X$, let $\Lambda_{A,\mu}$ stand for the class of all $\nu\in\mathcal E^+$ having the property
\begin{equation}\label{io}\kappa\nu\geqslant\kappa\mu\text{ \ n.e.\ on $A$},\end{equation}
where $\kappa\nu:=\int\kappa(\cdot,y)\,d\nu(y)$ is the {\it potential}, and {\it n.e.}\ ({\it nearly everywhere\/}) means that the inequality holds on all of $A$ except for a subset  of $A$ of inner capacity zero.

\begin{definition}[{\rm see \cite[Definition~4.1]{Z-arx1}}]\label{i-b-def}The {\it inner balayage\/} $\mu^A$ of $\mu\in\mathcal E^+$ to $A\subset X$ is the (unique) measure in $\Lambda_{A,\mu}$ of minimal potential, that is, $\mu^A\in\Lambda_{A,\mu}$ and
\[\kappa\mu^A=\min_{\nu\in\Lambda_{A,\mu}}\,\kappa\nu\text{ \ everywhere on $X$}.\]\end{definition}

Observe that this definition is in agreement with the concept of inner Newtonian balayage by Cartan as well as with that of inner Riesz balayage by the author (see \cite[Section~19, Theorem~1]{Ca2} and \cite[Theorem~4.3]{Z-bal}, respectively).

The uniqueness of the inner balayage $\mu^A$ follows easily by the energy principle, cf.\ (\ref{en}). As for the existence of $\mu^A$, suppose for a moment that $A=K$ is compact. Based on the consistency and domination principles, one can prove by generalizing the (classical) Gauss variational method\footnote{See \cite{C0,Ca2}, cf.\ also \cite[Section~IV.5.23]{L}.} that the inner balayage $\mu^K\in\mathcal E^+$ does exist; it is concentrated on the set $K$ itself (that is, $\mu^K\in\mathcal E^+_K$), and it is uniquely characterized within $\mathcal E^+_K$ by the equality
$\kappa\mu^K=\kappa\mu$ n.e.\ on $K$.
However, if the requirement of $A$ being compact is now dropped, then, in general, neither of the last two conclusions remains in force (unless of course $A$ is closed); compare with Theorem~\ref{th-intr}\,(a).

Denote by $\mathcal E^+_A$ the set of all $\nu\in\mathcal E^+$ concentrated on $A\subset X$, and by $\mathcal E'_A$ the closure of $\mathcal E^+_A$ in the strong topology on the pre-Hil\-bert space $\mathcal E$.

\begin{theorem}[{\rm see \cite[Theorem~4.3]{Z-arx1}}]\label{th-intr}For any\/ $\mu\in\mathcal E^+$ and\/ $A\subset X$, the inner balayage\/ $\mu^A$, introduced by Definition\/~{\rm\ref{i-b-def}}, does exist; it satisfies the relations
\begin{align}
\label{eq-pr-10}\kappa\mu^A&=\kappa\mu\text{ \ n.e.\ on\ }A,\\
\notag\kappa\mu^A&=\kappa\mu\text{ \ $\mu^A$-a.e.,}\\
\notag\kappa\mu^A&\leqslant\kappa\mu\text{ \ on $X$.}
\end{align}
Furthermore, $\mu^A$  belongs to\/ $\mathcal E'_A$, and it can equivalently be determined by means of either of the following two characteristic properties:
\begin{itemize}\item[\rm(a)] $\mu^A$ is the unique measure in\/ $\mathcal E'_A$ satisfying\/ {\rm(\ref{eq-pr-10})}.
\item[\rm(b)] $\mu^A$ is the unique measure in\/ $\mathcal E'_A$ such that\/\footnote{The inner balayage $\mu^A$ can therefore be found as the {\it orthogonal projection\/} of $\mu\in\mathcal E^+$ in the pre-Hil\-bert space $\mathcal E$ onto the convex cone $\mathcal E_A'\subset\mathcal E^+$, the strong closure of $\mathcal E^+_A$. Observe that the cone $\mathcal E_A'$ is strongly complete, being a strongly closed subset of the strongly complete cone $\mathcal E^+$, and hence the orthogonal projection of $\mu$ onto $\mathcal E_A'$ does exist, cf.\ \cite[Theorem~1.12.3]{E2}.}
\[\|\mu-\mu^A\|=\min_{\nu\in\mathcal E_A'}\,\|\mu-\nu\|=\inf_{\nu\in\mathcal E_A^+}\,\|\mu-\nu\|.\]
\end{itemize}
\end{theorem}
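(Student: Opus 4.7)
The plan is to construct $\mu^A$ as the orthogonal projection of $\mu$ onto the convex cone $\mathcal{E}'_A$ in the pre-Hilbert space $\mathcal{E}$, and then to verify that this projection simultaneously satisfies Definition~\ref{i-b-def}, the three displayed identities, and both characterizations (a) and (b). Since $\mathcal{E}^+$ is strongly complete by the perfectness (consistency) of $\kappa$, and $\mathcal{E}'_A$ is by definition strongly closed in $\mathcal{E}^+$, the cone $\mathcal{E}'_A$ is itself strongly complete. Hence the projection $\mu_0 \in \mathcal{E}'_A$ exists and is uniquely characterized by the variational property $\|\mu - \mu_0\| = \min_{\nu \in \mathcal{E}'_A} \|\mu - \nu\|$, which is exactly (b) once one notes that the infimum over $\mathcal{E}_A^+$ equals that over its strong closure $\mathcal{E}'_A$.

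Next I would extract the two cornerstone inequalities from the variational characterization. Since $\mathcal{E}'_A$ is a convex cone, rescaling $\mu_0 \mapsto t\mu_0$ for $t > 0$ yields the orthogonality relation $(\mu - \mu_0, \mu_0) = 0$, while testing against $\mu_0 + \sigma$ with $\sigma \in \mathcal{E}^+_A$ (and by approximation, $\sigma \in \mathcal{E}'_A$) yields
\[
(\mu_0 - \mu,\sigma) \geqslant 0 \quad\text{for every } \sigma \in \mathcal{E}^+_A,
\]
i.e.\ $\int \kappa(\mu_0 - \mu)\,d\sigma \geqslant 0$. By a standard argument in perfect-kernel theory (test against $\sigma = \varepsilon_y^K$, the balayage of a Dirac mass to a compact $K \subset A$, which lies in $\mathcal{E}^+_A$ because $A$ is closed there, then exhaust $A$ by compacts and use countable subadditivity of inner capacity), this forces $\kappa\mu_0 \geqslant \kappa\mu$ nearly everywhere on $A$. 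Conversely, $\mu_0 \in \mathcal{E}'_A$ can be written as a strong limit of measures in $\mathcal{E}^+_A$, so lower semicontinuity of the potential together with the Cauchy–Schwarz-type continuity of $\nu \mapsto \int \kappa\mu\,d\nu$ gives $\kappa\mu_0 \leqslant \kappa\mu$ on the subset of $A$ where potentials behave regularly, and the exceptional set has inner capacity zero; combined with the opposite inequality this produces $\kappa\mu_0 = \kappa\mu$ nearly everywhere on $A$, which is (\ref{eq-pr-10}).

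To pass from the n.e.\ identity on $A$ to the global pointwise inequality $\kappa\mu_0 \leqslant \kappa\mu$ on all of $X$, I would invoke the domination principle: because $\mu_0 \in \mathcal{E}'_A$, the n.e.-equality on $A$ together with the orthogonality relation implies $\kappa\mu_0 = \kappa\mu$ holds $\mu_0$-almost everywhere (this is the second displayed identity), and the domination principle applied to the potentials of the two measures of finite energy then propagates the inequality $\kappa\mu_0 \leqslant \kappa\mu$ from $\mu_0$-a.e.\ to everywhere on $X$. With all three identities in hand, $\mu_0 \in \Lambda_{A,\mu}$ is automatic, and minimality of $\kappa\mu_0$ on $X$ in the sense of Definition~\ref{i-b-def} follows because any competing $\nu \in \Lambda_{A,\mu}$ satisfies $\kappa\nu \geqslant \kappa\mu = \kappa\mu_0$ n.e.\ on $A$ (hence $\mu_0$-a.e., since $\mu_0 \in \mathcal{E}'_A$), and a second application of the domination principle upgrades this to $\kappa\nu \geqslant \kappa\mu_0$ everywhere on $X$. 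Uniqueness in (a) is immediate from the energy principle: two measures in $\mathcal{E}'_A$ with potentials coinciding n.e.\ on $A$ must coincide, since their difference has zero energy when integrated against any measure in $\mathcal{E}^+_A$, hence against any measure in $\mathcal{E}'_A$, in particular against itself.

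The main obstacle I expect is the step linking the variational inequality on the convex cone $\mathcal{E}'_A$ to the pointwise/n.e.\ statement $\kappa\mu_0 \geqslant \kappa\mu$ n.e.\ on $A$: one has only an integral inequality against test measures in $\mathcal{E}^+_A$, and to convert this into a nearly everywhere statement on $A$ (rather than merely quasi-everywhere, or on compact subsets) requires the domination principle together with the interplay between inner capacity and the strong closure defining $\mathcal{E}'_A$. The passage between these three notions — n.e., $\mu_0$-a.e., and pointwise on $X$ — via repeated use of the domination principle is what makes the argument delicate, but once set up it runs in parallel to Cartan's treatment of the Newtonian case.
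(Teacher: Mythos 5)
Your overall architecture --- defining $\mu^A$ as the orthogonal projection $\mu_0$ of $\mu$ onto the strongly complete convex cone $\mathcal E_A'$ --- is the right starting point, and your treatment of the existence of the projection, of characterization (b), of the variational relations $(\mu_0-\mu,\sigma)\geqslant0$ for $\sigma\in\mathcal E^+_A$ and $(\mu_0-\mu,\mu_0)=0$, of the inequality $\kappa\mu_0\geqslant\kappa\mu$ n.e.\ on $A$, and of the uniqueness in (a) are sound. The proof breaks, however, at every point where you pass from a relation holding n.e.\ on $A$ to the same relation holding $\mu_0$-a.e.\ ``since $\mu_0\in\mathcal E_A'$''. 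That implication is false: a measure in $\mathcal E_A'$ need not be concentrated on $A$, and may even be carried entirely by $A^c$. For the Newtonian kernel on $\mathbb R^3$ with $A$ an open ball and $\mu$ of finite energy supported far from $\overline{A}$, the balayage $\mu^A$ is the harmonic-measure-type distribution carried by the boundary sphere, which is disjoint from $A$; so no amount of n.e.-on-$A$ information controls anything $\mu^A$-a.e. This gap infects three steps: the claimed $\mu_0$-a.e.\ identity, the subsequent application of the domination principle to get $\kappa\mu_0\leqslant\kappa\mu$ on $X$, and the minimality argument against a competitor $\nu\in\Lambda_{A,\mu}$. In addition, your derivation of the reverse inequality $\kappa\mu_0\leqslant\kappa\mu$ n.e.\ on $A$ from ``lower semicontinuity plus strong approximation by $\mathcal E^+_A$'' is not an argument: the approximating measures are arbitrary elements of $\mathcal E^+_A$, and their potentials bear no a priori relation to $\kappa\mu$.

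The circle is actually broken (Cartan, and \cite{Z-arx1}) by treating compact sets first. For compact $K$ the projection $\mu^K$ lies in $\mathcal E^+_K$ itself (this cone is already strongly closed), so it \emph{is} concentrated on $K$; the transfer from n.e.\ on $K$ to $\mu^K$-a.e.\ is then legitimate, and the domination principle yields $\kappa\mu^K\leqslant\kappa\mu$ on all of $X$. One then shows that the net $(\mu^K)_{K\in\mathfrak C_A}$ is strong Cauchy with limit equal to the projection $\mu_0$ onto $\mathcal E_A'$ (this uses that every $\nu\in\mathcal E^+_A$ is the strong limit of its traces $\nu|_K$), whence $\kappa\mu_0=\sup_{K\in\mathfrak C_A}\kappa\mu^K\leqslant\kappa\mu$ everywhere, cf.\ (\ref{later-bal1}). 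Only \emph{after} that does the $\mu_0$-a.e.\ identity follow, from $\kappa\mu\geqslant\kappa\mu_0$ on $X$ combined with $\int(\kappa\mu-\kappa\mu_0)\,d\mu_0=(\mu-\mu_0,\mu_0)=0$; and minimality over $\Lambda_{A,\mu}$ is likewise obtained by dominating each $\kappa\mu^K$ separately and then taking the supremum over $K\in\mathfrak C_A$. I recommend restructuring the proof along these lines.
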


Given $A\subset X$, denote by $\mathfrak C_A$ the upward directed set of all compact subsets $K$ of $A$, where $K_1\leqslant K_2$ if and only if $K_1\subset K_2$. If $T$ is a topological Hausdorff space, and a net $(t_K)_{K\in\mathfrak C_A}\subset T$ converges to $t_0\in T$, then we shall indicate this fact by writing
\[t_K\to t_0\text{ \ in $T$ as $K\uparrow A$}.\]

The following convergence theorem justifies the term `inner balayage'.

\begin{theorem}[{\rm see \cite[Theorem~4.8]{Z-arx1}}]\label{th-intr''}For any\/ $\mu\in\mathcal E^+$ and\/ $A\subset X$, the following two assertions on the convergence of inner swept measures and their potentials hold:
\begin{itemize}\item[\rm(a)] $\mu^K\to\mu^A$ strongly and vaguely in\/ $\mathcal E^+$ as\/ $K\uparrow A$.
\item[\rm(b)] $\kappa\mu^K\uparrow\kappa\mu^A$ pointwise on\/ $X$ as\/ $K\uparrow A$,
and hence
\begin{equation}\label{later-bal1}\kappa\mu^A=\sup_{K\in\mathfrak C_A}\,\kappa\mu^K\text{ \ everywhere on $X$.}\end{equation}
\end{itemize}\end{theorem}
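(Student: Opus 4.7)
The plan is to leverage the orthogonal projection characterization from Theorem~\ref{th-intr}(b): $\mu^K$ is the strong-norm projection of $\mu$ onto the strongly complete convex cone $\mathcal{E}'_K$, and analogously for $\mu^A$. Monotonicity of the defining cones is immediate: if $K_1\subset K_2$, then $\mathcal{E}^+_{K_1}\subset\mathcal{E}^+_{K_2}$, hence $\mathcal{E}'_{K_1}\subset\mathcal{E}'_{K_2}\subset\mathcal{E}'_A$. Consequently the net of distances $\bigl(\|\mu-\mu^K\|\bigr)_{K\in\mathfrak C_A}$ is decreasing and bounded below by $\|\mu-\mu^A\|$.

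I next show that this net actually decreases to $\|\mu-\mu^A\|$. Given $\varepsilon>0$, since $\mu^A$ lies in the strong closure of $\mathcal{E}^+_A$, I choose $\nu\in\mathcal{E}^+_A$ with $\|\mu^A-\nu\|<\varepsilon/2$. The crucial auxiliary fact (the main obstacle, discussed below) is that every $\nu\in\mathcal{E}^+_A$ is strongly approximable by its restrictions $\nu|_K$ to compact $K\subset A$; selecting $K$ with $\|\nu-\nu|_K\|<\varepsilon/2$ and noting $\nu|_K\in\mathcal{E}^+_K$ yields $\|\mu-\mu^K\|\leqslant\|\mu-\nu|_K\|<\|\mu-\mu^A\|+\varepsilon$. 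A standard parallelogram-identity computation in the pre-Hilbert space $\mathcal{E}$, combined with the fact that $(\mu^{K_1}+\mu^{K_2})/2\in\mathcal{E}'_{K_2}$ whenever $K_1\subset K_2$, then gives
\[\|\mu^{K_2}-\mu^{K_1}\|^2\leqslant 2\|\mu-\mu^{K_1}\|^2-2\|\mu-\mu^{K_2}\|^2\longrightarrow 0,\]
so $(\mu^K)_{K\in\mathfrak C_A}$ is strongly Cauchy. Strong completeness of $\mathcal{E}'_A$ produces a strong limit, which must equal $\mu^A$ by the uniqueness in Theorem~\ref{th-intr}(b). Vague convergence follows because the strong topology on $\mathcal{E}^+$ is finer than the vague one, settling (a).

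For (b), I first establish monotonicity of the potentials: if $K_1\subset K_2\subset A$, then $\kappa\mu^{K_2}\geqslant\kappa\mu$ n.e.\ on $K_1$ (since this holds n.e.\ on $K_2$ by Theorem~\ref{th-intr}), so $\mu^{K_2}\in\Lambda_{K_1,\mu}$ and the minimality in Definition~\ref{i-b-def} forces $\kappa\mu^{K_1}\leqslant\kappa\mu^{K_2}$ everywhere on $X$; the same reasoning with $\mu^A$ in place of $\mu^{K_2}$ gives $\kappa\mu^K\leqslant\kappa\mu^A$ on $X$ for every $K\in\mathfrak C_A$. Hence $u:=\sup_K\kappa\mu^K\leqslant\kappa\mu^A$. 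For the reverse direction, the vague convergence $\mu^K\to\mu^A$ from part (a) combined with the lower semicontinuity of $y\mapsto\kappa(x,y)$ yields $\liminf_K\kappa\mu^K(x)\geqslant\kappa\mu^A(x)$ for each $x\in X$; monotonicity then upgrades the liminf to a supremum, proving (\ref{later-bal1}).

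The main obstacle is the strong approximation $\nu|_K\to\nu$ as $K\uparrow A$ for arbitrary $\nu\in\mathcal{E}^+_A$. This rests on an inner-regularity property of $\nu$ relative to $A$ (rooted in the definition of ``concentrated on $A$'', i.e.\ $\nu^*(X\setminus A)=0$). Granted this, the identity
\[\|\nu-\nu|_K\|^2=\|\nu\|^2-2(\nu,\nu|_K)+\|\nu|_K\|^2\]
reduces the task to verifying that both $(\nu,\nu|_K)$ and $\|\nu|_K\|^2$ tend to $\|\nu\|^2$ as $K\uparrow A$, which follows by monotone convergence applied to the nonnegative l.s.c.\ potential $\kappa\nu$. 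A minor further subtlety is the $\liminf$ inequality for potentials under vague convergence of a \emph{net} (not merely a sequence); this is handled by approximating $\kappa(x,\cdot)$ pointwise from below by nonnegative elements of $C_0$ and invoking the definition of vague convergence.
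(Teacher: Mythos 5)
Your argument is correct, and it follows the same route the paper itself takes where it reproves the analogous sequential statement (Theorem~\ref{pr-cont}, Step~1): decreasing distances to the nested cones $\mathcal E'_K$, a parallelogram/convexity estimate to get a strong Cauchy net, identification of the limit with the orthogonal projection via the strong (or vague) approximation $\nu|_K\to\nu$ for $\nu\in\mathcal E^+_A$, and then monotonicity from Definition~\ref{i-b-def} plus the principle of descent for the potentials. The theorem is only cited here from \cite[Theorem~4.8]{Z-arx1}, but your proof, including the correct isolation of the trace-approximation step as the real content, matches that standard approach; no gaps.
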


\subsection{About the results obtained}\label{about} In the rest of the present paper, we shall often require assumptions $(\mathcal A_1)$ and/or $(\mathcal A_2)$ to be satisfied, where:
\begin{itemize}\item[$(\mathcal A_1)$] {\it The topology on a locally compact space\/ $X$ has a countable base.}
\item[$(\mathcal A_2)$] {\it The set of all\/ $f\in C_0(X)$ representable as potentials\/ $\kappa\lambda$ with\/ $\lambda\in\mathcal E(X)$ is dense in the space\/ $C_0(X)$ equipped with the inductive limit topology\/}(\footnote{Regarding the inductive limit topology on the space $C_0(X)$, see Bourbaki \cite[Section~II.4.4]{B4} and \cite[Section~III.1.1]{B2} (cf.\ also Sect.~\ref{sec-aux} below).}),(\footnote{We shall now provide three examples where $(\mathcal A_2)$ holds, whereas the validity of $(\mathcal A_1)$ is obvious.

    In the first, $\kappa$ is the $\alpha$-Riesz kernel $|x-y|^{\alpha-n}$, $\alpha\in(0,n)$, on $X:=\mathbb R^n$, $n\geqslant2$. Observe that for every $f\in C_0(\mathbb R^n)$, there exist a compact set $K\subset\mathbb R^n$ and a sequence $(f_j)\subset C_0^\infty(\mathbb R^n)$ (obtained by regularization \cite[p.~22]{S}) such that all the $f$ and $f_j$ equal $0$ on $K^c:=\mathbb R^n\setminus K$, and moreover $(f_j)$ converges to $f$ uniformly on $K$ (hence, also in the inductive limit topology on $C_0(\mathbb R^n)$, cf.\ Lemma~\ref{foot-conv}). Since each $\varphi\in C_0^\infty(\mathbb R^n)$ can be represented as the $\alpha$-Riesz potential of a measure on $\mathbb R^n$ of finite $\alpha$-Riesz energy (see \cite[Lemma~1.1]{L} and \cite[Lemma~3.4]{Zor21}), $(\mathcal A_2)$ indeed holds.

    In the next two examples, $X:=D$, where $D\subset\mathbb R^n$, $n\geqslant2$, is open. Then $(\mathcal A_2)$ holds if either $\kappa$ is the ($2$-)Green kernel for the Laplace operator on Greenian $D$, or $\kappa$ is the $\alpha$-Green kernel $g^\alpha_D$ of order $\alpha\in(1,2)$ for the fractional Laplacian on bounded $D$ of class $C^{1,1}$. This follows by applying \cite[p.~75, Remark]{L}, resp.\ \cite[Eq.~(19)]{Bogdan}, to $\varphi\in C_0^\infty(D)$, and then utilizing the same approximation technique as above. (Recall that $D$ is said to be of class $C^{1,1}$ if for every $y\in\partial D$, there exist open balls $B(x,r)\subset D$ and $B(x',r)\subset D^c$, where $r>0$, that are tangent at $y$, see \cite[p.~458]{Bogdan}. Regarding the definition of the $\alpha$-Green kernel $g^\alpha_D$, see \cite[Section~IV.5.20]{L}, cf.\ \cite[Section~2]{Bogdan}.)\label{FF}}).
\end{itemize}

The main result of this work, formulated as Theorem~\ref{th-m} below, basically shows that under these two assumptions
the inner balayage can always be reduced to balayage to Borel sets.

\begin{theorem}\label{th-m}Suppose that\/ $(\mathcal A_1)$ and\/ $(\mathcal A_2)$ both hold. For arbitrary\/ $A\subset X$, there exists a\/ $K_\sigma$-set\,\footnote{A set $Q\subset X$ is said to be $\sigma$-{\it com\-pact\/} or {\it of class\/ $K_\sigma$} (in short, a $K_\sigma$-{\it set\/}) if it is representable as a countable union of compact subsets of $X$. A locally compact space $X$ of class $K_\sigma$ is also said to be {\it countable at infinity}, cf.\ \cite[Section~I.9, Definition~5]{B1}.} $A_0\subset A$ having the property
\begin{equation}\label{eq-main1}\mu^A=\mu^{A_0}\text{ \ for all\/ $\mu\in\mathcal E^+$.}\end{equation}
Moreover,
\begin{equation}\label{eq-main1''}\mu^A=\mu^{A_0}=\mu^{*A_0}\text{ \ for all\/ $\mu\in\mathcal E^+$},\end{equation}
where\/ $\mu^{*A_0}$ denotes the outer balayage of\/ $\mu$ to\/ $A_0$ {\rm(see \cite[Definition~9.1]{Z-arx1})}.
\end{theorem}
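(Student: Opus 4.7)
The plan is to test $\mu^A$ against a countable family of potentials and use a symmetry identity to eliminate the dependence of $A_0$ on $\mu$. Under $(\mathcal A_1)$ the space $X$ is second countable, $\sigma$-compact, and metrizable, so $C_0(X)$ with its inductive limit topology admits a countable dense subset (pieced together from countable dense subsets of each $C_K(X)$ along an exhaustion $X=\bigcup_n K_n$). Combined with $(\mathcal A_2)$, this yields a countable family $(\lambda_j)_{j\ge1}\subset\mathcal E^+$ whose potentials $\{\kappa\lambda_j\}$ have $\mathbb R$-linear span dense in $C_0(X)$ in the inductive limit topology.

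For each $j$, Theorem~\ref{th-intr''}(a) gives $\lambda_j^K\to\lambda_j^A$ strongly as $K\uparrow A$; since the strong topology is metric, I extract an increasing sequence $(K_{j,n})_n\subset\mathfrak C_A$ with $\lambda_j^{K_{j,n}}\to\lambda_j^A$ strongly. Set $A_0:=\bigcup_{j,n}K_{j,n}$, a $K_\sigma$-subset of $A$. Because $K_{j,n}\subset A_0\subset A$ and $B\mapsto\kappa\nu^B$ is monotone (a direct consequence of (\ref{later-bal1})), one has $\kappa\lambda_j^{K_{j,n}}\le\kappa\lambda_j^{A_0}\le\kappa\lambda_j^A$; letting $n\to\infty$ and invoking Theorem~\ref{th-intr''}(b) yields $\kappa\lambda_j^{A_0}=\kappa\lambda_j^A$ pointwise on $X$, and hence $\lambda_j^{A_0}=\lambda_j^A$ by the energy principle.

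To extend the identity to arbitrary $\mu\in\mathcal E^+$, I invoke the symmetry relation
\[\int\kappa\mu^B\,d\lambda=(\mu^B,\lambda^B)=\int\kappa\lambda^B\,d\mu\qquad(\mu,\lambda\in\mathcal E^+,\ B\subset X),\]
obtained by strongly approximating $\mu^B,\lambda^B\in\mathcal E'_B$ by sequences in $\mathcal E^+_B$ and applying $\kappa\mu=\kappa\mu^B$, $\kappa\lambda=\kappa\lambda^B$ n.e.\ on $B$, together with the fact that measures of finite energy do not charge sets of inner capacity zero. Applying this with $B=A$ and $B=A_0$ and using the previous step gives $\int\kappa\lambda_j\,d\mu^A=\int\kappa\lambda_j\,d\mu^{A_0}$ for every $j$; by linearity and density, $\int f\,d\mu^A=\int f\,d\mu^{A_0}$ for all $f\in C_0(X)$, so $\mu^A=\mu^{A_0}$ vaguely and (\ref{eq-main1}) follows.

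Finally, for (\ref{eq-main1''}), I use that $A_0$ is a Borel set, hence capacitable by Choquet's theorem; the equality of inner and outer capacities on $A_0$ translates, via the characterization of $\mu^{*A_0}$ as a limit over decreasing open neighborhoods of $A_0$ combined with Theorem~\ref{th-intr''}, into $\mu^{A_0}=\mu^{*A_0}$. The main obstacle is the decoupling step: Theorem~\ref{th-intr''} naturally produces a $K_\sigma$-set tailored to one measure, and it is the symmetry identity, applied through a fixed countable family whose potentials are dense in $C_0(X)$, that allows a single $A_0$ to control the balayage of all $\mu\in\mathcal E^+$ simultaneously.
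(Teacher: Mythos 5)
Your proposal is correct and follows essentially the same route as the paper: a countable family $\mathcal E^\circ$ of finite-energy measures whose potentials are dense in $C_0(X)$ (from $(\mathcal A_1)$ and $(\mathcal A_2)$), a per-test-measure $K_\sigma$-reduction $\kappa\lambda_j^{A'_j}=\kappa\lambda_j^A$ combined with monotonicity, the symmetry relation $\kappa(\mu^B,\lambda)=\kappa(\mu,\lambda^B)$ to transfer the identity to all $\mu\in\mathcal E^+$ through the fixed countable family, and capacitability of Borel sets for the outer-balayage claim. The one step to tighten is the passage $\kappa\lambda_j^{K_{j,n}}\uparrow\kappa\lambda_j^A$ pointwise along your extracted sequence: Theorem~\ref{th-intr''}(b) concerns the full net over $\mathfrak C_A$, so you must either add the principle of descent (strong convergence implies vague convergence, whence $\kappa\lambda_j^A\leqslant\liminf_n\kappa\lambda_j^{K_{j,n}}$) or, as the paper does, select the sequence via the theorem on increasing nets of l.s.c.\ functions on a second-countable space so that the pointwise supremum is preserved by construction.
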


\begin{remark}\label{def-outer}The concept of\/ {\it outer balayage\/} was defined in \cite{Z-arx1} similarly to that of inner balayage, the only difference being in replacing an exceptional set in (\ref{io}) by that of {\it outer capacity\/} zero. As seen from \cite[Section~19, Theorem~1$'$]{Ca2}, this concept
agrees with that of outer Newtonian balayage by Cartan. Furthermore, the outer balayage is {\it unique}, which follows easily by the energy principle, cf.\ (\ref{en}). As for the existence of the outer balayage, see Theorem~\ref{l-outer} below.\end{remark}

Note that the $K_\sigma$-set $A_0\subset A$, introduced in Theorem~\ref{th-m}, does not depend on the choice of $\mu\in\mathcal E^+$. Actually, we have proved even more (see Theorem~\ref{cor-m}): relation (\ref{eq-main1}) remains valid with $A_0$ replaced by {\it any\/} $Q$ such that $A_0\subset Q\subset A$, while (\ref{eq-main1''}) still holds if $A_0$ is replaced by {\it any Borel\/} $B$, where $A_0\subset B\subset A$.

\begin{remark}\label{rem-sign}Theorems~\ref{th-m} and \ref{cor-m} can certainly be extended to {\it signed\/} measures $\mu$ of finite energy by defining the inner and the outer balayage by linearity:
\begin{equation}\label{def-sign}\mu^A:=(\mu^+)^A-(\mu^-)^A,\quad\mu^{*A}:=(\mu^+)^{*A}-(\mu^-)^{*A},\end{equation}
$\mu^+$ and $\mu^-$ denoting the positive and negative parts of $\mu$ in the Hahn--Jor\-dan decomposition. (Here we have used the fact that $\mu$ has finite energy if and only if so do both $\mu^+$ and $\mu^-$, the kernel $\kappa$ being positive definite.)\end{remark}

It is worth emphasizing that for any $\mu\in\mathcal E$ and $A\subset X$, the inner balayage $\mu^A$ is uniquely determined within $\mathcal E$ by the following symmetry relation (see Theorem~\ref{l-alt}):
\[\kappa(\mu^A,\nu)=\kappa(\nu^A,\mu)\text{ \ for all $\nu\in\mathcal E$}.\]
But if now $(\mathcal A_1)$ and $(\mathcal A_2)$ are both met, then this characteristic property of the inner balayage $\mu^A$ needs only to be verified for certain {\it countably many\/} $\nu\in\mathcal E$ that are independent of the choice of $A$ and $\mu$.
In more details, the following theorem holds.

\begin{theorem}\label{l-alt-countt}Under assumptions\/ $(\mathcal A_1)$ and\/ $(\mathcal A_2)$, there exists a countable set\/ $\mathcal E^\circ\subset\mathcal E$ such that, for any\/ $\mu\in\mathcal E$ and $A\subset X$, the inner balayage\/ $\mu^A$ is uniquely determined within\/ $\mathcal E$ by the relation
\begin{equation}\label{rel1}\kappa(\mu^A,\lambda)=\kappa(\lambda^A,\mu)\text{ \ for all $\lambda\in\mathcal E^\circ$}.\end{equation}
That is, if for some\/ $\xi\in\mathcal E$,
\begin{equation}\label{rel2}\kappa(\xi,\lambda)=\kappa(\lambda^A,\mu)\text{ \ for all $\lambda\in\mathcal E^\circ$,}\end{equation}
then
\[\xi=\mu^A.\]
If\/ $A=B$ is Borel, then the same remains valid with\/ $\mu^A$ and\/ $\lambda^A$ replaced by the outer swept measures\/ $\mu^{*B}$ and\/ $\lambda^{*B}$, respectively.
\end{theorem}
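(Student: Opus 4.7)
The plan is to reduce the uniqueness claim to a density argument in $C_0(X)$. The preceding Theorem~\ref{l-alt} furnishes the unrestricted symmetry $\kappa(\mu^A,\nu)=\kappa(\nu^A,\mu)$ valid for every $\nu\in\mathcal E$, so if $\xi\in\mathcal E$ satisfies (\ref{rel2}) then $\eta:=\xi-\mu^A\in\mathcal E$ satisfies $\kappa(\eta,\lambda)=0$ for every $\lambda\in\mathcal E^\circ$. All measures involved having finite energy, Fubini's theorem rewrites this as
\[
\int\kappa\lambda\,d\eta=0\quad\text{for every }\lambda\in\mathcal E^\circ.
\]
Since $\eta\in\mathfrak M(X)$ is a continuous linear functional on $C_0(X)$ equipped with the inductive limit topology, the proof will be complete as soon as $\{\kappa\lambda:\lambda\in\mathcal E^\circ\}$ is dense in $C_0(X)$ in that topology.

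The construction of $\mathcal E^\circ$ uses both hypotheses. Under $(\mathcal A_1)$ the space $X$ is $\sigma$-compact; fix an exhaustion $X=\bigcup_n K_n$ with $K_n$ compact and $K_n\subset\mathrm{int}\,K_{n+1}$. Each $K_n$ is metrizable, hence the closed subspace
\[
C_{K_n}(X):=\{f\in C_0(X):\operatorname{supp}f\subset K_n\}
\]
of $C(K_n)$ is a separable Banach space in the sup norm; choose a countable dense subset $\mathcal F_n\subset C_{K_n}(X)$. Then $\mathcal F:=\bigcup_n\mathcal F_n$ is countable and dense in $C_0(X)$ in the inductive limit topology, every $f\in C_0(X)$ being the uniform limit of elements of some $\mathcal F_n$ supported in a common $K_n$. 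Invoking $(\mathcal A_2)$, for each $f\in\mathcal F$ I extract a sequence $(\lambda_{f,m})_{m\in\mathbb N}\subset\mathcal E$ with $\kappa\lambda_{f,m}\in C_0(X)$ and $\kappa\lambda_{f,m}\to f$ in the inductive limit topology. Setting
\[
\mathcal E^\circ:=\{\lambda_{f,m}:f\in\mathcal F,\ m\in\mathbb N\},
\]
I obtain a countable subset of $\mathcal E$ whose potentials contain $\mathcal F$ in their inductive limit closure, so by transitivity of closure $\{\kappa\lambda:\lambda\in\mathcal E^\circ\}$ is dense in $C_0(X)$.

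Combining the two paragraphs yields $\int f\,d\eta=0$ for every $f\in C_0(X)$, whence $\eta=0$ and $\xi=\mu^A$. Finally, if $A=B$ is Borel then Theorems~\ref{th-m} and~\ref{cor-m}, together with Remark~\ref{rem-sign}, give $\mu^B=\mu^{*B}$ and $\lambda^B=\lambda^{*B}$ for all $\mu,\lambda\in\mathcal E$, so (\ref{rel1})--(\ref{rel2}) coincide verbatim with their starred versions and the outer analogue follows from the inner one. I expect the principal technical subtlety to be the extraction of a \emph{countable} dense family of potentials from the possibly enormous set delivered by $(\mathcal A_2)$; this is resolved by pairing the exhaustion $(K_n)$ coming from $(\mathcal A_1)$ with the uniform-norm separability of each $C_{K_n}(X)$, followed by a diagonal selection of approximating potentials.
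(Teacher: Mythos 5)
Your argument is correct and is at heart the paper's own: build a countable $\mathcal E^\circ$ by taking a countable set dense in $C_0(X)$ for the inductive limit topology (your separability argument for each $C_{K_n}(X)$ is a hands-on version of what the paper gets from Lemma~\ref{foot-conv'}) and approximating each of its members by potentials via $(\mathcal A_2)$; then combine the unrestricted symmetry of Theorem~\ref{l-alt} with a density/separation step. The one genuine divergence is in that last step: the paper quotes the latter part of Theorem~\ref{l-count} (two measures with the same integrals $\int\kappa\lambda\,d(\cdot)$ for all $\lambda\in\mathcal E^\circ$ coincide), whereas you prove that separation statement inline, observing that $\eta=\xi-\mu^A$, viewed as a continuous functional on $C_0(X)$ in the inductive limit topology, annihilates the dense set $\{\kappa\lambda:\lambda\in\mathcal E^\circ\}\subset C_0(X)$ and hence vanishes. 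This is cleaner for the uniqueness claim, since it avoids the mass-bound estimate needed in the paper for the full sequential convergence criterion of Theorem~\ref{l-count}; the paper's route earns that extra machinery because Theorem~\ref{l-count} is reused elsewhere (e.g.\ in the proof of Theorem~\ref{cor-m}). Two small points: the Fubini-type identity $\kappa(\eta,\lambda)=\int\kappa\lambda\,d\eta$ for signed $\eta,\lambda\in\mathcal E$ deserves a word (decompose into positive parts and note that $\kappa\lambda$ is defined and finite n.e., hence $|\eta|$-a.e., $X$ being of class $K_\sigma$ under $(\mathcal A_1)$); and for the Borel case the correct citation is Theorem~\ref{l-outer}, which gives $\mu^{B}=\mu^{*B}$ and $\lambda^{B}=\lambda^{*B}$ for \emph{every} Borel $B$ and every measure of finite energy — Theorem~\ref{cor-m} only yields this for Borel sets squeezed between $A_0$ and $A$.
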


The proofs of Theorems~\ref{th-m} and \ref{l-alt-countt} (Sect.~\ref{sec-last-1}) use substantially the following Theorem~\ref{l-count}, whereas the proof of Theorem~\ref{l-count} (Sect.~\ref{count-proof}) is based on Lemmas~\ref{foot-conv} and \ref{foot-conv'} (Sect.~\ref{sec-aux}), specifying the concept of the inductive limit topology on $C_0(X)$ for $X$ satisfying the second axiom of countability (more generally, for $X$ of class $K_\sigma$).

\begin{theorem}\label{l-count}Dropping for a moment the requirement of the energy, consistency, and domination principles, assume that\/ $(\mathcal A_1)$ and\/ $(\mathcal A_2)$ both hold. Then there exists a countable set\/ $\mathcal E^\circ\subset\mathcal E$ having the following property: a sequence\/ $(\nu_k)\subset\mathfrak M^+$ converges vaguely to\/ $\nu_0$ if and only if\/\footnote{Both (\ref{l-count1}) and (\ref{l-count2}) are understood in the sense that the integrals involved therein do exist.}
\begin{equation}\label{l-count1}\lim_{k\to\infty}\,\int\kappa\lambda\,d\nu_k=\int\kappa\lambda\,d\nu_0\text{ \ for all $\lambda\in\mathcal E^\circ$.}\end{equation}
The vague topology on\/ $\mathfrak M$ being Hausdorff, any two\/ $\mu,\nu\in\mathfrak M$ are thus equal if\/ {\rm(}and only if\/{\rm)}
\begin{equation}\label{l-count2}\int\kappa\lambda\,d\mu=\int\kappa\lambda\,d\nu\text{ \ for all $\lambda\in\mathcal E^\circ$.}\end{equation}
\end{theorem}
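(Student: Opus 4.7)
The plan is as follows. By $(\mathcal A_1)$ the space $X$ is $\sigma$-compact; fix an exhaustion $X=\bigcup_{n}K_n$ with $K_n$ compact and $K_n\subset\mathrm{int}(K_{n+1})$. For each $n$ the Banach space $C_0(X;K_n):=\{f\in C_0(X):\mathrm{supp}\,f\subset K_n\}$ is a closed subspace of the separable space $C(K_n)$, hence separable; pick a countable sup-norm-dense subset $D_n\subset C_0(X;K_n)$ and set $D:=\bigcup_n D_n$. By Lemmas~\ref{foot-conv} and~\ref{foot-conv'}, which characterise convergence in the inductive limit topology as uniform convergence with supports in one common compact set, $D$ is countable and dense in $C_0(X)$ in that topology, since every $f\in C_0(X)$ lies in some $C_0(X;K_n)$ and is approximated there in sup norm by members of $D_n$.

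Next, invoking $(\mathcal A_2)$ together with the sequential characterisation of the inductive limit topology, I would choose, for every $d\in D$, a sequence $(\lambda^d_j)_{j\geqslant1}\subset\mathcal E$ whose potentials $\kappa\lambda^d_j\in C_0(X)$ converge to $d$ in the inductive limit topology---so uniformly and with supports contained in a single compact set depending on $d$. Set $\mathcal E^\circ:=\{\lambda^d_j:d\in D,\,j\in\mathbb N\}$, a countable subfamily of $\mathcal E$. A diagonal argument then shows that for every $f\in C_0(X)$ there is a sequence $(\kappa\lambda_m)_{m\geqslant1}$ with $\lambda_m\in\mathcal E^\circ$ such that $\kappa\lambda_m\to f$ in the inductive limit topology.

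The forward implication of the main equivalence is immediate: since $\kappa\lambda\in C_0(X)$ for every $\lambda\in\mathcal E^\circ$, vague convergence $\nu_k\to\nu_0$ yields (\ref{l-count1}) by the definition of the vague topology. For the converse, assume (\ref{l-count1}). I would first establish the uniform bound $\sup_k\nu_k(K)<\infty$ for each compact $K\subset X$: by Urysohn pick $g\in C_0(X)$ with $g\geqslant2$ on $K$, then use the density established above to choose $\lambda\in\mathcal E^\circ$ with $\|\kappa\lambda-g\|_\infty\leqslant1/2$, so that $\kappa\lambda\geqslant3/2$ on $K$, and obtain
\[\sup_k\nu_k(K)\leqslant\tfrac{2}{3}\sup_k\int\kappa\lambda\,d\nu_k<\infty,\]
the last supremum being finite because the sequence $(\int\kappa\lambda\,d\nu_k)_k$ converges. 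Then, for arbitrary $f\in C_0(X)$, pick $\lambda_m\in\mathcal E^\circ$ with $\kappa\lambda_m\to f$ in the inductive limit topology, so that the supports of all $\kappa\lambda_m$ and of $f$ lie in one common compact $L$; with $M:=\sup_k\nu_k(L)+|\nu_0|(L)<\infty$, splitting
\[\bigl|{\textstyle\int f\,d\nu_k}-{\textstyle\int f\,d\nu_0}\bigr|\leqslant\|f-\kappa\lambda_m\|_\infty\cdot M+\bigl|{\textstyle\int\kappa\lambda_m\,d\nu_k}-{\textstyle\int\kappa\lambda_m\,d\nu_0}\bigr|\]
and letting first $m\to\infty$ then $k\to\infty$ gives $\int f\,d\nu_k\to\int f\,d\nu_0$, i.e.\ vague convergence.

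The final Hausdorff assertion is the analogous statement for a constant sequence, now with signed $\mu,\nu\in\mathfrak M$: one approximates each $f\in C_0(X)$ by $\kappa\lambda_m$ with $\lambda_m\in\mathcal E^\circ$ in the inductive limit topology, and exploits finiteness of $|\mu|(L)$ and $|\nu|(L)$ on the enveloping compact $L$ containing the supports of $f$ and of all $\kappa\lambda_m$ to deduce $\int f\,d\mu=\int f\,d\nu$ for every $f\in C_0(X)$, whence $\mu=\nu$ by Hausdorffness of the vague topology. The main obstacle I foresee is the bootstrap from the scalar hypothesis (\ref{l-count1}) to the measure-theoretic bound $\sup_k\nu_k(K)<\infty$: it is exactly here that $(\mathcal A_2)$ enters essentially, by being combined with Urysohn's lemma to manufacture a potential in $\mathcal E^\circ$ that is bounded away from zero on a prescribed compact set, thus squeezing $\nu_k(K)$ underneath a convergent---hence bounded---sequence of integrals.
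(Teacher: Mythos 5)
Your construction of $\mathcal E^\circ$ (countable $\mathcal T$-dense set $D$ in $C_0(X)$ via an exhaustion of $X$, then approximation of each $d\in D$ by compactly supported potentials using $(\mathcal A_2)$, then a diagonal argument) and your overall strategy for the converse (uniform mass bound on compacta followed by a three-term estimate) coincide with the paper's proof. The forward implication and the final Hausdorff assertion are handled correctly.

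However, there is a genuine gap at exactly the point you flagged as delicate: the bound $\sup_k\nu_k(K)<\infty$. You take $g\in C_0(X)$ with $g\geqslant 2$ on $K$ and $\lambda\in\mathcal E^\circ$ with $\|\kappa\lambda-g\|_\infty\leqslant 1/2$, and conclude $\nu_k(K)\leqslant\tfrac23\int\kappa\lambda\,d\nu_k$. That inequality requires $\tfrac23\,\kappa\lambda\geqslant 1_K$ \emph{everywhere on} $X$, i.e.\ $\kappa\lambda\geqslant0$ off $K$. But $\lambda$ is in general a \emph{signed} measure (that is all $(\mathcal A_2)$ provides), so the uniform approximation only gives $\kappa\lambda\geqslant g-\tfrac12\geqslant-\tfrac12$ off $K$; on the (compact) support $S$ of $\kappa\lambda$ one only gets
\[\int\kappa\lambda\,d\nu_k\;\geqslant\;\tfrac32\,\nu_k(K)-\tfrac12\,\nu_k(S\setminus K),\]
and the term $\nu_k(S\setminus K)$ is of the same nature as the quantity you are trying to bound --- the argument is circular. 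The paper's proof avoids this by producing a test function $\varphi_K\in C_0^\circ$ satisfying the \emph{global} inequality $\varphi_K\geqslant 1_K$ on all of $X$ (nonnegative everywhere and $\geqslant1$ on $K$), from which $\nu_k(K)\leqslant\nu_k(\varphi_K)$ follows at once; obtaining such a $\varphi_K$ is precisely the nontrivial content of that step (the paper invokes the Tietze--Urysohn theorem together with Lemma~\ref{foot-conv} for it), and a mere sup-norm $\varepsilon$-approximation of a Urysohn function does not deliver it. To repair your proof you must either establish the existence of such a globally minorizing potential in $C_0^\circ$, or find another route to the uniform bound (\ref{est}); as written, the converse implication is not proved.
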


As an application of the above-quoted theorems, we establish the convergence of inner and outer swept measures and their potentials under the exhaustion of arbitrary $A\subset X$ by the sequence $(A\cap U_j)$, $(U_j)$ being an increasing sequence of Borel sets with the union $X$ (see Theorem~\ref{pr-cont}, compare with Theorem~\ref{th-intr''}).

The theory thereby developed covers the case of the Green kernel for the Laplace operator on a Greenian set $D\subset\mathbb R^n$, $n\geqslant2$ (thus in particular that of the Newtonian kernel $|x-y|^{2-n}$ on $\mathbb R^n$, $n\geqslant3$), that of the $\alpha$-Riesz kernels $|x-y|^{\alpha-n}$ on $\mathbb R^n$, $n\geqslant2$, where $\alpha\leqslant2$ and $\alpha<n$, as well as  that of the $\alpha$-Green kernels $g^\alpha_D$ of order $\alpha\in(1,2)$ for the fractional Laplacian on a bounded open $C^{1,1}$-set $D\subset\mathbb R^n$, $n\geqslant2$ (cf.\ footnote~\ref{FF} and Example~\ref{rem:clas}).
This suggests that the present work may be useful, for instance, in
the analytic, constructive, and numerical analysis of  minimum energy problems. See e.g.\ recent papers \cite{DFHSZ2}, \cite{Dr0}, \cite{FZ-Pot2}, \cite{HWZ}, \cite{Z-Pot}--\cite{ZPot3}, \cite{Z-AMP} for some applications of balayage to minimum energy problems in various settings, \cite{Z-Pot}--\cite{ZPot3} dealing with a general function kernel $\kappa$ on a locally compact space $X$.

\section{Basic facts of potential theory on locally compact spaces}\label{sec}

The present section review some basic facts of the theory of potentials on a locally compact space $X$, to be used throughout the paper.
Here and in the sequel we follow the notation and conventions of Sect.~\ref{sec-intr}.

\begin{lemma}[{\rm see \cite[Section~IV.1, Proposition~4]{B2}}]\label{lemma-semi}For any l.s.c.\ function\/ $\psi:X\to[0,\infty]$, the mapping\/ $\mu\mapsto\int\psi\,d\mu$ is
vaguely l.s.c.\ on\/ $\mathfrak M^+$, the integral here being understood as upper integral.\end{lemma}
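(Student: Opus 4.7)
The plan is to express $\int\psi\,d\mu$ as the pointwise supremum (over $\mu$) of a family of vaguely continuous linear functionals, and then invoke the classical fact that a pointwise supremum of lower semicontinuous functions is lower semicontinuous.

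First, I would establish the representation
\[\int\psi\,d\mu=\sup\Bigl\{\int f\,d\mu:f\in C_0(X),\ 0\leqslant f\leqslant\psi\Bigr\}\text{ \ for all $\mu\in\mathfrak M^+$},\]
which is the standard description of the upper integral of a non-negative l.s.c.\ function with respect to a positive Radon measure on a locally compact space. To prove the non-trivial direction ($\geqslant$), I would fix $c<\int\psi\,d\mu$, exhaust the open level sets $\{\psi>t\}$ from inside by compact sets using the inner regularity of $\mu$, truncate $\psi$ by $\psi\wedge n$, and apply Urysohn's lemma on a suitably chosen compact neighbourhood to manufacture an $f\in C_0(X)$ with $0\leqslant f\leqslant\psi$ and $\int f\,d\mu>c$. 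The case $\int\psi\,d\mu=\infty$ is handled by the same device, since $\psi$ l.s.c.\ forces $\{\psi>t\}$ to be open for every $t\geqslant 0$.

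Second, by the very definition of the vague topology on $\mathfrak M$, the evaluation $\mu\mapsto\int f\,d\mu$ is vaguely continuous on $\mathfrak M^+$ for each fixed $f\in C_0(X)$, and \emph{a fortiori} vaguely l.s.c. The representation above therefore exhibits $\mu\mapsto\int\psi\,d\mu$ as the supremum of the family of vaguely l.s.c.\ functions $\{\mu\mapsto\int f\,d\mu:f\in C_0(X),\ 0\leqslant f\leqslant\psi\}$. Since the pointwise supremum of any family of l.s.c.\ functions on a topological space is l.s.c., the conclusion follows.

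The main obstacle I anticipate is the careful justification of the first step when $\psi$ attains the value $+\infty$ on a set of positive $\mu$-measure, where one must combine truncation with inner regularity so that the approximating $f\in C_0(X)$ actually remains dominated by $\psi$. Once this identity is secured, the rest of the argument is essentially automatic; the symmetry and consistency properties of the kernel $\kappa$ play no role here, so the lemma is purely a measure-theoretic statement about Radon integrals.
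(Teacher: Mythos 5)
Your proposal is correct and follows the same route as the source the paper cites for this lemma (Bourbaki, \emph{Integration}, Section~IV.1, Proposition~4): write the upper integral of a positive l.s.c.\ function as $\sup\{\mu(f):f\in C_0(X),\ 0\leqslant f\leqslant\psi\}$ and observe that a pointwise supremum of vaguely continuous linear functionals is vaguely l.s.c. The only remark worth making is that in the Radon-measure framework used here this supremum formula is essentially the \emph{definition} of the upper integral of a positive l.s.c.\ function, so the truncation and inner-regularity argument you anticipate as the main obstacle (including the case where $\psi$ takes the value $+\infty$) is not actually needed.
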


Given a (positive, symmetric, l.s.c.)\ kernel $\kappa$ on $X$ and (signed) measures $\mu,\nu\in\mathfrak M$, define the {\it potential\/} and the {\it mutual energy\/}  by
\begin{align*}\kappa\mu(x)&:=\int\kappa(x,y)\,d\mu(y),\quad x\in X,\\
\kappa(\mu,\nu)&:=\int\kappa(x,y)\,d(\mu\otimes\nu)(x,y),
\end{align*}
respectively, provided the right-hand side is well defined as a finite number or $\pm\infty$ (for more details, see \cite[Section~2.1]{F1}). In particular, if $\mu,\nu$ are positive, then $\kappa\mu(x)$, resp.\ $\kappa(\mu,\nu)$, is well defined and represents a positive l.s.c.\ function of $(x,\mu)\in X\times\mathfrak M^+$, resp.\ $(\mu,\nu)\in\mathfrak M^+\times\mathfrak M^+$ (the {\it principle of descent\/} \cite[Lemma~2.2.1]{F1}, cf.\ Lemma~\ref{lemma-semi}).
For $\mu=\nu$, the mutual energy
$\kappa(\mu,\nu)$ defines the {\it energy\/} $\kappa(\mu,\mu)$ of $\mu$.

In what follows, except for Sect.~\ref{proof-prop}, we assume that a kernel $\kappa$ is strictly positive definite and satisfies the
consistency and domination principles. Recall that a kernel $\kappa$ is said to be {\it positive definite\/} if for any $\mu\in\mathfrak M$, the energy $\kappa(\mu,\mu)$ is ${}\geqslant0$ whenever defined, and {\it strictly positive definite\/}\footnote{For such a kernel $\kappa$, the {\it energy principle\/} is said to hold.} if, moreover, $\kappa(\mu,\mu)=0$ implies $\mu=0$.
All $\mu\in\mathfrak M$ with finite energy then form a pre-Hil\-bert space $\mathcal E=\mathcal E(X)$ with the inner product $(\mu,\nu):=\kappa(\mu,\nu)$ and the norm $\|\mu\|:=\sqrt{\kappa(\mu,\mu)}$, see \cite[Section~3.1]{F1}. The (Hausdorff) topology on $\mathcal E$ defined by means of the norm $\|\cdot\|$ is said to be {\it strong}.

It follows by use of the Cauchy--Schwarz (Bunyakovski) inequality\footnote{Another application of (\ref{Ca}) shows that any positive definite kernel $\kappa$ satisfying assumption $(\mathcal A_2)$ (Sect.~\ref{about}) must be strictly positive definite. Indeed, if $\kappa(\mu,\mu)=0$ for some $\mu\in\mathcal E$, then, by (\ref{Ca}),
\[|\mu(f)|=|\kappa(\lambda,\mu)|\leqslant\|\lambda\|\cdot\|\mu\|=0\]
for all $f\in C_0$ representable as potentials $\kappa\lambda$ with $\lambda\in\mathcal E$. The set $C_0\cap\{\kappa\lambda: \lambda\in\mathcal E\}$ being dense in the space $C_0$ equipped with the inductive limit topology, applying \cite[Section~III.1.7]{B2} gives $\mu=0$.}
\begin{equation}\label{Ca}|\kappa(\mu,\nu)|\leqslant\|\mu\|\cdot\|\nu\|,\text{ \ where $\mu,\nu\in\mathcal E$},\end{equation} that $\mu\in\mathfrak M$ has finite energy if (and only if) so do both $\mu^+,\mu^-$. Thus $\mathcal E=\mathcal E^+-\mathcal E^+$, where $\mathcal E^+=\mathcal E^+(X):=\mathcal E\cap\mathfrak M^+$.

A (strictly positive definite) kernel $\kappa$ is called {\it consistent\/} \cite[Section~3.3]{F1} if every strong Cauchy net in the cone $\mathcal E^+$ converges strongly to any of its vague limit points.
The cone $\mathcal E^+$ then becomes {\it strongly complete}, a strongly bounded part of $\mathcal E^+$ being vaguely relatively compact by \cite[Lemma~2.5.1]{F1}. As the strong limit of a strong Cauchy net $(\mu_k)\subset\mathcal E^+$ is unique, all the vague limit points of such $(\mu_k)$ must be equal. The vague topology on $\mathfrak M$ being Hausdorff, we thus conclude by applying \cite[Section~I.9.1, Corollary]{B1} that any strong Cauchy net in $\mathcal E^+$ converges to the same (unique) limit both strongly and vaguely. In Fuglede's terminology \cite[Section~3.3]{F1}, a strictly positive definite, consistent kernel is, therefore, {\it perfect}.

Given a set $A\subset X$, denote by $\mathfrak M^+_A$ the cone of all $\mu\in\mathfrak M^+$ {\it concentrated on\/}
$A$, which means that $A^c:=X\setminus A$ is locally $\mu$-neg\-lig\-ible, or equivalently, that $A$ is $\mu$-meas\-ur\-able and $\mu=\mu|_A$, $\mu|_A:=1_A\cdot\mu$ being the {\it trace\/} of $\mu$ to $A$ \cite[Section~IV.14.7]{E2}. (Note that for $\mu\in\mathfrak M^+_A$, the indicator function $1_A$ of $A$ is locally $\mu$-int\-egr\-able.) The total mass of $\mu\in\mathfrak M^+_A$ is $\mu(X)=\mu_*(A)$, $\mu_*(A)$ and $\mu^*(A)$ denoting the {\it inner\/} and the {\it outer\/} measure of $A$, respectively. If, moreover, $A$ is closed, or if $A^c$ is contained in a countable union of sets $Q_k$ with $\mu^*(Q_k)<\infty$,\footnote{If the latter holds, $A^c$ is said to be $\mu$-$\sigma$-{\it finite\/} \cite[Section~IV.7.3]{E2}. This occurs e.g.\ if the measure $\mu$ is {\it bounded\/} (that is, with $\mu(X)<\infty$), or if the space $X$ is of class $K_\sigma$.} then for any $\mu\in\mathfrak M^+_A$, $A^c$ is $\mu$-neg\-lig\-ible, i.e.\ $\mu^*(A^c)=0$. In particular, if $A$ is closed, then $\mathfrak M^+_A$ consists of all $\mu\in\mathfrak M^+$ {\it supported by\/} $A$ (that is, with support contained in $A$, cf.\ \cite[Section~III.2.2]{B2}).

Denoting now $\mathcal E^+_A:=\mathfrak M^+_A\cap\mathcal E$, we define the {\it inner capacity\/} of $A$ by
\[c_*(A):=\Bigl[\inf_{\mu\in\mathcal E^+_A: \ \mu(X)=1}\,\kappa(\mu,\mu)\Bigr]^{-1}.\]
(The infimum over the empty set is interpreted as $+\infty$.)
Then \cite[p.~153, Eq.~(2)]{F1}
\begin{equation*}\label{153}c_*(A)=\sup\,c_*(K)\text{ \ ($K\subset A$ compact)}.\end{equation*}
Also, by homogeneity reasons (cf.\ \cite[Lemma~2.3.1]{F1}),
\begin{equation*}c_*(A)=0\iff\mathcal E^+_A=\{0\}\iff\mathcal E^+_K=\{0\}\text{ \ for every compact $K\subset A$}.\label{2.3.1}\end{equation*}

We are thus led to the following conclusion.

\begin{lemma}\label{l-negl}Given\/ $\mu\in\mathcal E^+$, consider a\/ $\mu$-measurable set\/ $A\subset X$ with\/ $c_*(A)=0$. Then\/ $A$ is locally\/ $\mu$-neg\-lig\-ible; and\/ $A$ is\/ $\mu$-neg\-lig\-ible if, moreover, it is\/ $\mu$-$\sigma$-fi\-ni\-te.\end{lemma}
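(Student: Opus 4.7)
The plan is to reduce the statement to the compact case via inner regularity of the Radon measure $\mu$, and then exploit the characterization $c_*(A)=0\iff\mathcal E^+_K=\{0\}$ for every compact $K\subset A$ recorded just above the lemma.

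First, I would fix an arbitrary compact $K\subset X$ and show that $\mu(A\cap K)=0$. Since $\mu\in\mathcal E^+\subset\mathfrak M^+$ is Radon, $\mu(K)<\infty$; and as $A$ is $\mu$-measurable, so is $A\cap K$, with $\mu(A\cap K)\le\mu(K)<\infty$. By inner regularity of $\mu$ restricted to a $\mu$-integrable $\mu$-measurable set,
\[\mu(A\cap K)=\sup\bigl\{\mu(K'):\ K'\subset A\cap K,\ K'\text{ compact}\bigr\}.\]
Thus it suffices to check that $\mu(K')=0$ for every compact $K'\subset A$.

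The key step is to consider the trace $\mu|_{K'}=1_{K'}\cdot\mu\in\mathfrak M^+_{K'}$. Since the kernel $\kappa$ is ${}\geqslant0$ and $\mu|_{K'}\leqslant\mu$ pointwise as measures, we get $\kappa(\mu|_{K'},\mu|_{K'})\leqslant\kappa(\mu,\mu)<\infty$, so $\mu|_{K'}\in\mathcal E^+_{K'}$. But $K'\subset A$ and $c_*$ is monotone, so $c_*(K')\le c_*(A)=0$; by the equivalence stated just before the lemma, $\mathcal E^+_{K'}=\{0\}$. Hence $\mu|_{K'}=0$, i.e.\ $\mu(K')=0$. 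Combined with the display above this yields $\mu(A\cap K)=0$ for every compact $K\subset X$, which is precisely local $\mu$-negligibility of $A$.

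For the second assertion, if $A$ is additionally $\mu$-$\sigma$-finite, I would write $A\subset\bigcup_n Q_n$ with $\mu^*(Q_n)<\infty$. By definition of the outer measure, each $Q_n$ is contained in an open set $U_n$ of finite $\mu$-measure; and any $\mu$-integrable open set in a locally compact space is the union of a countable increasing sequence of compact sets up to a $\mu$-negligible remainder. Consequently $A$ is contained in a countable union of compact sets plus a $\mu$-negligible set, so subadditivity of $\mu^*$ together with the first part forces $\mu^*(A)=0$, as required (this is the standard passage ``locally negligible $+$ $\sigma$-finite $\Rightarrow$ negligible'', cf.\ \cite[Section~IV.5]{B2}).

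The only mildly delicate point is verifying that the trace $\mu|_{K'}$ inherits finite energy from $\mu$; this is immediate from positivity of $\kappa$ and the inequality $\mu|_{K'}\otimes\mu|_{K'}\leqslant\mu\otimes\mu$, but is the one place where the specific structure of $\mathcal E^+$ enters. Everything else is bookkeeping with Radon measures on locally compact spaces.
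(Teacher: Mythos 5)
Your proof is correct and follows the route the paper intends (the paper leaves the argument implicit after listing the facts $c_*(A)=\sup_{K\subset A}c_*(K)$ and $c_*(A)=0\iff\mathcal E^+_K=\{0\}$ for all compact $K\subset A$): reduce to compact $K'\subset A$ by inner regularity, note that the trace $\mu|_{K'}$ has finite energy since $\kappa\geqslant0$, hence $\mu|_{K'}\in\mathcal E^+_{K'}=\{0\}$, and pass from local negligibility to negligibility via $\mu$-$\sigma$-finiteness in the standard Bourbaki way. No gaps.
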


Defining further the {\it outer capacity\/} of a set $A$ by
\[c^*(A):=\inf\,c_*(D),\]
$D$ ranging over all open sets that contain $A$, we obtain $c_*(A)\leqslant c^*(A)$.
A set $A$ is said to be {\it capacitable\/} if $c_*(A)=c^*(A)$.
The following result on capacitability is established by a direct application of \cite[Theorem~4.5]{F1}.

\begin{theorem}\label{l-top}Any Borel subset of a locally compact perfectly normal space\/ $X$ of class\/ $K_\sigma$, endowed with a perfect kernel\/ $\kappa$, is capacitable.\footnote{By Urysohn's theorem \cite[Section~IX.1, Theorem~1]{B3}, a topological Hausdorff space $Y$ is said to be {\it normal\/} if for any two disjoint closed subsets $F_1,F_2$ of $Y$, there exist disjoint open sets $D_1,D_2$ such that $F_i\subset D_i$ $(i=1,2)$. Further, a normal space $Y$ is said to be {\it perfectly normal\/} \cite[Section~IX.4, Exercise~7]{B3} if each closed subset of $Y$ is a countable intersection of open sets (or, equivalently, if each open subset of $Y$ is a countable union of closed sets).}
\end{theorem}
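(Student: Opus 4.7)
The plan is to invoke Fuglede's capacitability theorem (Theorem 4.5 in \cite{F1}) directly, so the bulk of the work is not in constructing a new capacitability argument but in checking that the hypotheses of that theorem are met under the present assumptions. Fuglede's theorem asserts, roughly, that for a perfect kernel $\kappa$ on a locally compact space $X$, every set belonging to a certain ``Suslin-type'' family generated from compact sets is capacitable; the two things to verify are therefore (i) that the topological assumptions on $X$ put all Borel sets inside that family, and (ii) that perfectness of $\kappa$ supplies the capacity-theoretic properties (monotonicity, continuity from below along increasing sequences, and the correct behaviour of $c^*$ on open sets and of $c_*$ on compacta) invoked in Fuglede's proof.

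For (i), I would use the topological hypotheses in the following way. Perfect normality is equivalent to the property that every closed subset of $X$ is $G_\delta$, hence every open subset is $F_\sigma$. Combining this with $X$ being of class $K_\sigma$, every closed subset of $X$ is itself $\sigma$-compact, and therefore every open subset of $X$ is $K_\sigma$ as well. Starting from this, the class of subsets of $X$ generated from compact sets by the operations of countable union and countable intersection already contains all open and all closed sets, and by Borel's theorem it contains the whole Borel $\sigma$-algebra. In particular every Borel set lies in the Suslin scheme on compacta and so is admissible for Fuglede's theorem.

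For (ii), the relevant facts are built into the hypothesis that $\kappa$ is perfect (i.e., strictly positive definite and consistent), which we have been assuming throughout the paper. The inner capacity $c_*$ is monotone and inner regular on compacta by definition, and the outer capacity $c^*$ is outer regular by open sets by construction; the nontrivial continuity from below of $c_*$ on arbitrary increasing sequences, together with the coincidence $c^*(K)=c_*(K)$ for compacta, is exactly what Fuglede establishes in the course of proving his Theorem~4.5 under perfectness. Applying his theorem to a Borel set $B\subset X$ then gives $c^*(B)=c_*(B)$, i.e.\ capacitability of $B$.

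The main obstacle I expect is purely bibliographic: Fuglede's original statement is phrased in terms of his own class of ``$K$-Suslin'' or ``$K$-analytic'' sets and of spaces for which all open sets are $K_\sigma$, rather than in terms of the pair of hypotheses ``perfectly normal'' and ``$K_\sigma$''. So the real work is to make the link of the preceding paragraph explicit and to confirm that a locally compact, perfectly normal, $\sigma$-compact Hausdorff space really does fall into the class considered in \cite[Section~4]{F1}; once that identification is made, the conclusion follows with no further computation.
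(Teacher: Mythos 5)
Your proposal coincides with the paper's own treatment: the paper gives no argument beyond the remark that the theorem ``is established by a direct application of \cite[Theorem~4.5]{F1}'', which is exactly the route you take. Your added verification---that perfect normality together with $\sigma$-compactness makes every open (and closed) subset of $X$ a $K_\sigma$-set, so that all Borel sets lie in the class of sets covered by Fuglede's capacitability theorem, while perfectness of $\kappa$ supplies the continuity properties of the capacity that his proof requires---is the correct and appropriate way to make that citation precise.
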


An assertion $\mathcal U(x)$ is said to hold {\it nearly everywhere\/} ({\it n.e.})\ on a set $A$ if the set $E$ of all $x\in A$ for which $\mathcal U(x)$ fails has inner capacity zero: $c_*(E)=0$. Replacing here $c_*(\cdot)$ by $c^*(\cdot)$ leads to the notion {\it qua\-si-ev\-ery\-whe\-re\/} ({\it q.e.})\ on a set $A$.

For any $\mu\in\mathcal E$, the potential $\kappa\mu$ is well defined and finite q.e.\ (hence, n.e.) on $X$, see \cite{F1} (Corollary to Lemma~3.2.3). Furthermore, it follows from \cite[Lemma~3.2.1]{F1} by use of the energy principle that
\begin{equation}\label{en}\mu=0\iff\kappa\mu=0\text{ \ n.e.\ on $X$}\iff\kappa\mu=0\text{ \ q.e.\ on $X$}.\end{equation}

We finally recall that a kernel $\kappa$ is said to satisfy {\it the domination principle\/} if for any $\mu,\nu\in\mathcal E^+$ such that $\kappa\mu\leqslant\kappa\nu$ $\mu$-a.e., the same inequality holds on all of $X$.

\begin{example}\label{rem:clas} The $\alpha$-Riesz kernel $|x-y|^{\alpha-n}$ on $\mathbb R^n$, $n\geqslant2$, with $\alpha\leqslant2$ and $\alpha<n$ (thus in particular the Newtonian kernel $|x-y|^{2-n}$ on $\mathbb R^n$, $n\geqslant3$) satisfies the energy, consistency, and domination principles \cite[Theorems~1.15, 1.18, 1.27, 1.29]{L}, and so does the associated $\alpha$-Green kernel $g^\alpha_D$ on any open subset of $\mathbb R^n$ \cite[Theorems~4.6, 4.9, 4.11]{FZ}. The ($2$-)Green kernel on a planar Greenian set is strictly positive definite \cite[Section~I.XIII.7]{Doob} and consistent \cite{E}, and it satisfies the domination principle (see \cite[Theorem~5.1.11]{AG} or \cite[Section~I.V.10]{Doob}). Finally, the logarithmic kernel $-\log\,|x-y|$ on a closed disc in $\mathbb R^2$ of radius ${}<1$ is strictly positive definite and satisfies Frostman's maximum principle \cite[Theorems~1.6, 1.16]{L}, and hence it is perfect \cite[Theorem~3.4.2]{F1}. However, the domination principle then fails in general; it does hold only in a weaker sense where the measures $\mu,\nu$ involved in the ab\-ove-quo\-ted definition meet the additional requirement  $\nu(\mathbb R^2)\leqslant\mu(\mathbb R^2)$ \cite[Theorem~3.2]{ST}.\footnote{Because of this obstacle, the theory of balayage on a locally compact space $X$, developed in \cite{Z-arx1} and the present paper, does not cover the case of the logarithmic kernel on $\mathbb R^2$.}\end{example}

\section{Further properties of balayage} In all that follows, we keep the notation and conventions of Sects.~\ref{sec-intr} and~\ref{sec}.

The present section provides some further properties of $\mu^A$ and $\mu^{*A}$, the inner and the outer balayage of a (signed) measure $\mu$ of finite energy on a locally compact space $X$ to a set $A\subset X$ (for definitions, see Definition~\ref{i-b-def} and Remarks~\ref{def-outer} and \ref{rem-sign}).

\begin{lemma}[{\rm Monotonicity property}]\label{cor-mon}For any\/ $\mu\in\mathcal E^+$,
\begin{equation}\label{mon}\kappa\mu^A\leqslant\kappa\mu^Q\text{ \ whenever $A\subset Q$}.\end{equation}
\end{lemma}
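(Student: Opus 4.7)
The plan is to argue directly from the defining variational characterization of the inner balayage (Definition~\ref{i-b-def}), by showing that enlarging the target set shrinks the admissible class $\Lambda_{A,\mu}$ used in the minimization.

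First, I would observe the elementary fact that if $A\subset Q$ and an inequality holds nearly everywhere on $Q$, then it also holds nearly everywhere on $A$. Indeed, if $E\subset Q$ has $c_*(E)=0$, then $E\cap A\subset E$, so $c_*(E\cap A)=0$ (inner capacity being monotone along the chain of compact subsets, by the supremum formula $c_*(\cdot)=\sup c_*(K)$ recalled in Sect.~\ref{sec}). Consequently,
\[
\Lambda_{Q,\mu}\subset\Lambda_{A,\mu},
\]
since any $\nu\in\mathcal E^+$ with $\kappa\nu\geqslant\kappa\mu$ n.e.\ on $Q$ certainly satisfies the same inequality n.e.\ on the smaller set $A$.

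Second, I would invoke Definition~\ref{i-b-def}: $\mu^A$ is the (unique) element of $\Lambda_{A,\mu}$ achieving the pointwise minimum of potentials over $\Lambda_{A,\mu}$. Since $\mu^Q\in\Lambda_{Q,\mu}\subset\Lambda_{A,\mu}$, it is an admissible competitor for the minimization defining $\mu^A$, so that
\[
\kappa\mu^A=\min_{\nu\in\Lambda_{A,\mu}}\kappa\nu\leqslant\kappa\mu^Q\text{ \ everywhere on $X$,}
\]
which is precisely (\ref{mon}).

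There is no real obstacle here: the whole argument is a one-line consequence of the variational definition, once the routine fact $\Lambda_{Q,\mu}\subset\Lambda_{A,\mu}$ is recorded. The only conceptual point worth emphasizing is that the existence of $\mu^A$ and $\mu^Q$ is guaranteed by Theorem~\ref{th-intr}, so the argument is not vacuous.
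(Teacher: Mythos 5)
Your proof is correct, but it takes a different route from the paper's. The paper derives (\ref{mon}) in one line from the approximation formula (\ref{later-bal1}) of Theorem~\ref{th-intr''}(b): since $\mathfrak C_A\subset\mathfrak C_Q$, the supremum $\kappa\mu^A=\sup_{K\in\mathfrak C_A}\kappa\mu^K$ is taken over a smaller directed family than $\kappa\mu^Q=\sup_{K\in\mathfrak C_Q}\kappa\mu^K$. You instead argue directly from Definition~\ref{i-b-def}, noting that enlarging the set shrinks the exceptional-set condition and hence $\Lambda_{Q,\mu}\subset\Lambda_{A,\mu}$, so $\mu^Q$ is a competitor in the minimization defining $\mu^A$; your auxiliary observation that $c_*(E\cap A)\leqslant c_*(E)$ (monotonicity of inner capacity via the supremum over compact subsets) is exactly what is needed to justify the inclusion. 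Both arguments are one-liners, but yours is logically more economical: it relies only on the variational definition and the existence statement of Theorem~\ref{th-intr}, not on the convergence Theorem~\ref{th-intr''}, so it would survive in a setting where the compact-exhaustion result were not yet available. The paper's version buys brevity given that (\ref{later-bal1}) has already been quoted. No gaps.
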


\begin{proof}This follows directly from (\ref{later-bal1}) by noting that $\mathfrak C_A\subset\mathfrak C_Q$.\end{proof}

\begin{theorem}\label{l-alt}
For any\/ $\mu\in\mathcal E$ and\/ $A\subset X$,
\begin{equation}\label{alternative}\kappa(\mu^A,\nu)=\kappa(\mu,\nu^A)\text{ \ for all\/ $\nu\in\mathcal E$}.\end{equation}
Furthermore, symmetry relation\/ {\rm(\ref{alternative})} determines the inner balayage\/ $\mu^A$ uniquely within\/ $\mathcal E$; that is, if for some\/ $\xi\in\mathcal E$,
$\kappa(\xi,\nu)=\kappa(\mu,\nu^A)$ for all\/ $\nu\in\mathcal E$, then\/ $\xi=\mu^A$.
\end{theorem}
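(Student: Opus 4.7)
The plan is to reduce the claim to positive $\mu, \nu$ via the linearity convention of Remark~\ref{rem-sign}, and then to exploit Theorem~\ref{th-intr}(b), which identifies $\mu^A$ with the orthogonal projection of $\mu\in\mathcal{E}^+$ onto the strongly complete convex cone $\mathcal{E}'_A \subset \mathcal{E}^+$ in the pre-Hilbert space $\mathcal{E}$.

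For $\mu, \nu \in \mathcal{E}^+$, I would first aim to establish the auxiliary identity
$$\kappa(\mu^A, \sigma) = \kappa(\mu, \sigma) \quad \text{for every } \sigma \in \mathcal{E}'_A.$$
One direction, namely $\kappa(\mu - \mu^A, \sigma) \leqslant 0$, will come from the standard variational characterization of the projection onto a closed convex cone: comparing $\|\mu - \mu^A\|^2$ with $\|\mu - (\mu^A + t\sigma)\|^2$ as $t \downarrow 0$ yields this inequality, while testing with $\sigma = \mu^A$ and $\sigma = 0$ (both permissible since $\mathcal{E}'_A$ is a cone) will additionally give $\kappa(\mu - \mu^A, \mu^A) = 0$. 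The reverse inequality will come for free from the pointwise bound $\kappa\mu^A \leqslant \kappa\mu$ on all of $X$ asserted in Theorem~\ref{th-intr}: integrating it against the positive measure $\sigma$ immediately yields $\kappa(\mu - \mu^A, \sigma) \geqslant 0$.

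Plugging $\sigma := \nu^A \in \mathcal{E}'_A$ into this identity gives $\kappa(\mu^A, \nu^A) = \kappa(\mu, \nu^A)$; applying the identity again with the roles of $\mu$ and $\nu$ swapped, together with the symmetry of $\kappa(\cdot,\cdot)$, gives $\kappa(\mu^A, \nu^A) = \kappa(\mu^A, \nu)$. Chaining these two equalities yields (\ref{alternative}) on $\mathcal{E}^+$; bilinearity then extends it to signed $\mu, \nu \in \mathcal{E}$. For the uniqueness part, if $\xi \in \mathcal{E}$ satisfies $\kappa(\xi, \nu) = \kappa(\mu, \nu^A)$ for all $\nu \in \mathcal{E}$, subtracting from (\ref{alternative}) gives $\kappa(\xi - \mu^A, \nu) = 0$ for every $\nu \in \mathcal{E}$; testing with $\nu := \xi - \mu^A$ and invoking the energy principle (cf.\ (\ref{en})) then forces $\xi = \mu^A$.

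I expect the main (rather mild) subtlety to be the two-sided argument that upgrades the one-sided variational inequality to the exact equality above: this is precisely where the additional information that $\kappa\mu^A \leqslant \kappa\mu$ holds \emph{everywhere} on $X$, and not merely n.e.\ on $A$, plays the decisive role.
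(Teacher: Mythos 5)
Your proof is correct, but it takes a genuinely different route from the paper's. The paper first establishes the symmetry identity for compact $K\subset A$ --- using $\kappa\mu^K=\kappa\mu$ n.e.\ on $K$ together with Lemma~\ref{l-negl} to obtain $\kappa(\mu^K-\mu,\nu^K)=0$ and its counterpart with $\mu$ and $\nu$ interchanged --- and then passes to the limit $K\uparrow A$ by means of Theorem~\ref{th-intr''} and the principle of descent. You instead work with $A$ directly: the variational inequality for the orthogonal projection onto the strongly complete convex cone $\mathcal E_A'$ (Theorem~\ref{th-intr}(b)) gives $\kappa(\mu-\mu^A,\sigma)\leqslant0$ for all $\sigma\in\mathcal E_A'$, while integrating the everywhere-valid bound $\kappa\mu^A\leqslant\kappa\mu$ against the positive measure $\sigma\in\mathcal E_A'\subset\mathcal E^+$ gives the reverse inequality; taking $\sigma:=\nu^A\in\mathcal E_A'$ and symmetrizing then yields (\ref{alternative}) for positive measures, and bilinearity does the rest. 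This is arguably cleaner, since it avoids the limiting procedure and the measurability and negligibility considerations behind Lemma~\ref{l-negl}; the price is that it leans on two further ingredients of Theorem~\ref{th-intr} (the projection characterization and the global inequality $\kappa\mu^A\leqslant\kappa\mu$ on $X$), whereas the paper's argument uses only (\ref{eq-pr-10}) for compact sets together with the convergence theorem. Your uniqueness argument coincides with the paper's. A minor remark: the side observation $\kappa(\mu-\mu^A,\mu^A)=0$ obtained by testing with $\sigma=\mu^A$ and $c=0$ is not actually needed for your chain of equalities, and testing with ``$\sigma=0$'' should more precisely be phrased as testing the general inequality $\kappa(\mu-\mu^A,c-\mu^A)\leqslant0$ at $c=0$.
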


\begin{proof}On account of definition (\ref{def-sign}), we can certainly assume that $\mu,\nu$ are positive, the mutual energy being bilinear on measures of the class $\mathcal E$.

Consider first $K\in\mathfrak C_A$. Denoting $E:=K\cap\{\kappa\mu^K\ne\kappa\mu\}$, we obtain $c_*(E)=0$ by (\ref{eq-pr-10}). But the set $E$ is universally measurable, while $\nu^K\in\mathcal E^+$ is bounded and supported by $K$. By use of Lemma~\ref{l-negl}, we therefore get $\kappa\mu^K=\kappa\mu$ $\nu^K$-a.e., whence
\[\kappa(\mu^K-\mu,\nu^K)=0.\]
Similarly, $\kappa(\nu^K-\nu,\mu^K)=0$, which gives by subtraction
\begin{equation}\label{K}\kappa(\mu^K,\nu)=\kappa(\mu,\nu^K).\end{equation}

Since the potentials $\kappa\mu^K$ increase pointwise on $X$ as $K$ ranges through the directed set $\mathfrak C_A$ and do not exceed $\kappa\mu^A$ (Theorem~\ref{th-intr''}(b)),
\[\limsup_{K\uparrow A}\,\kappa(\mu^K,\nu)\leqslant\kappa(\mu^A,\nu).\] On the other hand,
$\mu^K\otimes\nu\to\mu^A\otimes\nu$ vaguely in $\mathfrak M^+\times\mathfrak M^+$ as $K\uparrow A$,
which is implied by the vague convergence of the net $(\mu^K)_{K\in\mathfrak C_A}$ to $\mu^A$ (Theorem~\ref{th-intr''}(a)) with the aid of \cite[Section~III.4, Exercise~1b]{B2}. Thus, by the principle of descent (cf.\ Lemma~\ref{lemma-semi}),
\[\kappa(\mu^A,\nu)\leqslant\liminf_{K\uparrow A}\,\kappa(\mu^K,\nu),\]
which together with the preceding display yields
\[\kappa(\mu^A,\nu)=\lim_{K\uparrow A}\,\kappa(\mu^K,\nu).\]
As the same holds with $\mu$ and $\nu$ interchanged, letting $K\uparrow A$ in (\ref{K}) establishes (\ref{alternative}) for any positive (hence, also for any signed) measures $\mu,\nu$ of finite energy.

To prove
the latter part of the theorem, suppose now that for some $\xi\in\mathcal E$, $\kappa(\xi,\nu)=\kappa(\mu,\nu^A)$ for all $\nu\in\mathcal E$. Subtracting this equality from (\ref{alternative}) gives
\[\kappa(\xi-\mu^A,\nu)=0\text{ \ for all $\nu\in\mathcal E$},\]
hence $\kappa(\xi-\mu^A,\xi-\mu^A)=0$ by substituting $\nu:=\xi-\mu^A$, and consequently $\xi=\mu^A$, the kernel $\kappa$ being strictly positive definite.
\end{proof}

\begin{remark}Assume for a moment that both $(\mathcal A_1)$ and $(\mathcal A_2)$ hold (see Sect.~\ref{about}). As pointed out in Theorem~\ref{l-alt-countt} (see Sect.~\ref{1.7} for its proof), the characteristic property of the inner balayage $\mu^A$, given by symmetry relation (\ref{alternative}), needs then only to be verified  for certain {\it countably many\/} $\nu\in\mathcal E$, independent of the choice of $A$ and $\mu$.\end{remark}

\begin{theorem}\label{l-outer}Suppose that a locally compact space\/ $X$ has a countable base. For any\/ $\mu\in\mathcal E$ and any Borel\/ $B\subset X$, the outer balayage\/ $\mu^{*B}$ then does exist,\footnote{In contrast to that, the existence of the inner balayage $\mu^A$ has been established for {\it arbitrary\/} $X$ and $A$ (Theorem~\ref{th-intr}).} and moreover
\begin{equation*}\label{bal-eqq}\mu^{*B}=\mu^B.\end{equation*}
\end{theorem}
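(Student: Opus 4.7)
The plan is to show that the inner balayage $\mu^B$, whose existence is already guaranteed by Theorem~\ref{th-intr}, itself satisfies the defining properties of the outer balayage $\mu^{*B}$. Uniqueness of $\mu^{*B}$ is automatic from the energy principle (Remark~\ref{def-outer}), so the content is existence together with the identity $\mu^{*B}=\mu^B$; by linearity (Remark~\ref{rem-sign}) it suffices to treat $\mu\in\mathcal E^+$. The first ingredient is topological: a locally compact Hausdorff space with a countable base is metrizable (by Urysohn) and $\sigma$-compact (the subfamily of basic open sets with compact closure is still a base, and its closures cover $X$), hence perfectly normal and of class $K_\sigma$. Consequently Theorem~\ref{l-top} applies, and every Borel subset of $X$ is capacitable.

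Next I would track the exceptional set of the inner balayage. By Theorem~\ref{th-intr}(a) together with the pointwise inequality $\kappa\mu^B\leqslant\kappa\mu$, the set
\[E:=B\cap\bigl\{x\in X:\kappa\mu^B(x)<\kappa\mu(x)\bigr\}\]
has inner capacity zero. Since both potentials are l.s.c.\ on $X$, and hence Borel measurable, $E$ is a Borel subset of the Borel set $B$; the topological preparation therefore yields $c^*(E)=c_*(E)=0$. Thus $\kappa\mu^B\geqslant\kappa\mu$ q.e.\ on $B$, that is, $\mu^B$ belongs to the class $\Lambda^*_{B,\mu}$ of $\mathcal E^+$-measures admissible for the outer balayage.

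To finish, I would compare the two admissible classes. Since every set of outer capacity zero is a fortiori of inner capacity zero, $\Lambda^*_{B,\mu}\subset\Lambda_{B,\mu}$, and Definition~\ref{i-b-def} combined with the inclusion $\mu^B\in\Lambda^*_{B,\mu}$ just established gives, for every $x\in X$,
\[\kappa\mu^B(x)=\min_{\nu\in\Lambda_{B,\mu}}\kappa\nu(x)\leqslant\min_{\nu\in\Lambda^*_{B,\mu}}\kappa\nu(x)\leqslant\kappa\mu^B(x).\]
Hence the outer-balayage minimum is attained at $\mu^B$, which simultaneously proves the existence of $\mu^{*B}$ and the identity $\mu^{*B}=\mu^B$.

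The one delicate point is the opening step: the capacitability theorem requires perfect normality and $K_\sigma$, so the leap from ``countable base'' to the applicability of Theorem~\ref{l-top} is where the real work is concentrated. Once Borel capacitability is available, promoting the n.e.\ relation of Theorem~\ref{th-intr}(a) on a Borel exceptional set to a q.e.\ relation is automatic, and the remainder is the short chain of inequalities above.
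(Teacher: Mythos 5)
Your proposal is correct and follows essentially the same route as the paper: deduce from second countability that $X$ is metrizable, $\sigma$-compact, and perfectly normal, invoke Theorem~\ref{l-top} to get capacitability of Borel sets, and use this to promote the nearly-everywhere identity of Theorem~\ref{th-intr} to a quasi-everywhere one, so that $\mu^B$ itself realizes the minimum defining $\mu^{*B}$. The only difference is that the paper delegates this last reduction to \cite[Section~9.2]{Z-arx1}, whereas you spell it out explicitly (and correctly).
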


\begin{proof} Fix $\mu\in\mathcal E$; on account of (\ref{def-sign}), we can assume $\mu\geqslant0$.
Being locally compact and second-countable, the space $X$ is metrizable and of class $K_\sigma$, see \cite[Section~IX.2, Corollary to Proposition~16]{B3}, and hence perfectly normal, see \cite[Section~IX.1, Proposition~2]{B3}. It therefore follows by applying Theorem~\ref{l-top} that all Borel subsets of $X$ are capacitable, the kernel $\kappa$ being perfect. But, as observed in \cite[Section~9.2]{Z-arx1}, such capacitability result is exactly what we need to show that the inner balayage $\mu^B$ then serves simultaneously as the outer balayage $\mu^{*B}$, $B\subset X$ being Borel. By virtue of the uniqueness of $\mu^B$ and $\mu^{*B}$, this establishes the theorem.
\end{proof}

\section{On the topological spaces $C_0(X)$ and $\mathfrak M(X)$}\label{proof-prop}
The aim of the present section is to establish some specific properties of the topological spaces $C_0(X)$ and $\mathfrak M(X)$, to be used in the proof of Theorem~\ref{count-proof}.

Throughout this section, we do not require any of the energy, consistency, or domination principles to hold, thus treating a kernel $\kappa$ on a locally compact space $X$ as an arbitrary symmetric, l.s.c.\ function $\kappa:X\times X\to[0,\infty]$.

\subsection{On the inductive limit topology on the space $C_0(X)$}\label{sec-aux}
According to Bourbaki \cite[Section~III.1.1]{B2}, the topology on $C_0(X)$ is defined as the {\it inductive limit\/} $\mathcal T$
of the locally convex topologies of the spaces $C_0(K;X)$, where $K$ ranges over the family of all compact subsets of $X$, while $C_0(K;X)$ is the space of all $f\in C_0(X)$ with ${\rm Supp}(f)\subset K$, equipped with the topology $\mathcal T_K$ of uniform convergence on $K$. Thus, by \cite[Section~II.4, Proposition~5]{B4}, $\mathcal T$ is the {\it finest\/} of the locally convex topologies on $C_0(X)$ for which all the canonical injections $C_0(K;X)\to C_0(X)$, $K\subset X$ being compact, are continuous.
With this topology on $C_0(X)$, $\mathfrak M(X)$ becomes precisely the dual space to $C_0(X)$, see \cite[Section~III.1.3]{B2} or \cite[Section~II.4, p.~29, Example]{B4}.

\begin{lemma}[{\rm see \cite[Section~III.1, Proposition~1(i)]{B2}}]\label{aux1}For any compact\/ $K\subset X$, the topology on the space\/ $C_0(K;X)$ induced by\/ $\mathcal T$ is identical with the topology\/ $\mathcal T_K$.\end{lemma}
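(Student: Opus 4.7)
The plan is to establish the equality of topologies by proving each is coarser than the other.

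One direction follows immediately from the definition. By construction, $\mathcal T$ is the finest locally convex topology on $C_0(X)$ making every canonical injection $i_L:C_0(L;X)\to C_0(X)$ (with $L\subset X$ compact) continuous. Taking $L=K$, the preimage under $i_K$ of any $\mathcal T$-open set in $C_0(X)$ is therefore $\mathcal T_K$-open in $C_0(K;X)$, which means that the subspace topology on $C_0(K;X)$ induced by $\mathcal T$ is coarser than $\mathcal T_K$.

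For the reverse inclusion, since both topologies are locally convex vector topologies, it suffices to exhibit, for each $\varepsilon>0$, a $\mathcal T$-neighborhood $W$ of $0$ in $C_0(X)$ with
\[W\cap C_0(K;X)\subset\{f\in C_0(K;X):\|f\|_K<\varepsilon\}.\]
The natural candidate is
\[W_\varepsilon:=\bigl\{f\in C_0(X):\,\textstyle\sup_{x\in X}|f(x)|<\varepsilon\bigr\},\]
which is plainly convex, balanced, and absorbing. The key observation is that for any compact $L\subset X$ and any $f\in C_0(L;X)$, the function $f$ vanishes off $L$, so $\sup_{x\in X}|f(x)|=\|f\|_L$; consequently $W_\varepsilon\cap C_0(L;X)=\{f\in C_0(L;X):\|f\|_L<\varepsilon\}$, which is $\mathcal T_L$-open. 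Invoking the standard description of neighborhoods of $0$ in the locally convex inductive limit (a convex balanced absorbing subset is such a neighborhood precisely when its intersection with every $C_0(L;X)$ is a $\mathcal T_L$-neighborhood of $0$; cf.\ \cite[Section~II.4, Proposition~5]{B4}), we deduce that $W_\varepsilon$ is a $\mathcal T$-neighborhood of $0$ in $C_0(X)$. Specializing to $L=K$ further yields that $W_\varepsilon\cap C_0(K;X)$ is exactly the basic $\mathcal T_K$-ball of radius $\varepsilon$, which is the desired inclusion.

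No serious difficulties arise; the argument is essentially the unwinding of the definition of the inductive limit topology. The only step demanding verification is that $W_\varepsilon$ is genuinely a $\mathcal T$-neighborhood of $0$, which hinges on the elementary but crucial fact that the global supremum of $f\in C_0(L;X)$ is realized on the compact set $L$, so that the global sup-norm on $C_0(X)$ restricts to each $\|\cdot\|_L$ on $C_0(L;X)$.
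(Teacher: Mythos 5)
Your argument is correct: the coarser direction is immediate from the continuity of the canonical injection, and the finer direction follows because the global sup-norm ball $W_\varepsilon$ is convex, balanced, absorbing, and meets each $C_0(L;X)$ in a $\mathcal T_L$-neighborhood of $0$ (since the supremum of such $f$ is attained on $L$), hence is a $\mathcal T$-neighborhood of $0$ whose trace on $C_0(K;X)$ is exactly the $\varepsilon$-ball for $\|\cdot\|_K$. The paper offers no proof, only the citation to Bourbaki, and your argument is essentially the standard one behind that reference.
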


Assume now that $X$ is of class $K_\sigma$; this occurs, in particular, if $X$ has a countable base (see \cite[Section~IX.2, Corollary to Proposition~16]{B3}). Then there exists a sequence of relatively compact open subsets $U_j$ with the union $X$ and such that $\overline{U}_j\subset U_{j+1}$, see \cite[Section~I.9, Proposition~15]{B1}. (Here $\overline{U}_j:={\rm Cl}_XU_j$.) The space $C_0(X)$ is then the {\it strict\/} inductive limit of the sequence of spaces $C_0(\overline{U}_j;X)$, cf.\ \cite[Section~II.4.6]{B4}, for the topology induced on $C_0(\overline{U}_j;X)$ by $\mathcal T_{\overline{U}_{j+1}}$ is just $\mathcal T_{\overline{U}_j}$. Hence, by \cite[Section~II.4, Proposition~9]{B4}, the space $C_0(X)$ is Hausdorff and complete (in the topology $\mathcal T$).

\begin{lemma}\label{foot-conv}Assume that a locally compact space\/ $X$ is of class\/ $K_\sigma$, and consider a sequence\/ $(f_k)\subset C_0(X)$. Then the following two assertions are equivalent.
\begin{itemize}\item[{\rm(i)}] $(f_k)$ converges to\/ $0$ in the strict inductive limit topology\/ $\mathcal T$.
\item[{\rm(ii)}]There exists a compact subset\/ $K$ of\/ $X$ such that\/ ${\rm Supp}(f_k)\subset K$ for all\/ $k$, and\/ $(f_k)$ converges to\/ $0$ uniformly on\/ $K$.\end{itemize}
\end{lemma}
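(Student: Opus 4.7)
The plan is to invoke the structure of $C_0(X)$ as the strict inductive limit of the Banach spaces $C_0(\overline{U}_j; X)$ (with topology $\mathcal T_{\overline U_j}$ of uniform convergence), exactly as set up in the paragraph preceding Lemma~\ref{foot-conv}, where $(U_j)$ is an increasing sequence of relatively compact open sets with $\overline U_j\subset U_{j+1}$ and $\bigcup_j U_j=X$. Thus the entire proof will reduce to translating two elementary facts about such strict inductive limits: continuity of the canonical injections, and the Dieudonn\'e--Schwartz-type property that every bounded (hence every convergent) sequence already lives in one of the steps.

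For the implication (ii)\,$\Rightarrow$\,(i), I would start from the compact set $K$ provided by (ii). Since $K\subset X=\bigcup_j U_j$, compactness of $K$ together with the nesting $\overline U_j\subset U_{j+1}$ yields an index $j_0$ with $K\subset\overline U_{j_0}$. Because ${\rm Supp}(f_k)\subset K\subset\overline U_{j_0}$, every $f_k$ lies in $C_0(\overline U_{j_0};X)$, and uniform convergence on $K$ is the same as uniform convergence on $\overline U_{j_0}$ (the functions vanish off $K$). Hence $f_k\to 0$ in $(C_0(\overline U_{j_0};X),\mathcal T_{\overline U_{j_0}})$. The canonical injection $C_0(\overline U_{j_0};X)\to C_0(X)$ being continuous by definition of the inductive limit topology, we conclude $f_k\to 0$ in $\mathcal T$.

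For the reverse implication (i)\,$\Rightarrow$\,(ii), which is the crux of the lemma, I would apply the key property of strict inductive limits of an increasing sequence of Hausdorff locally convex spaces: every bounded subset is contained in, and bounded in, some step (see \cite[Section~II.4.6, Proposition~6]{B4}). The hypotheses for this theorem are met here because, as already noted in the paragraph preceding the lemma, the topology induced by $\mathcal T_{\overline U_{j+1}}$ on $C_0(\overline U_j;X)$ coincides with $\mathcal T_{\overline U_j}$, and $C_0(\overline U_j;X)$ is easily seen to be closed in $C_0(\overline U_{j+1};X)$ (it is the kernel of the continuous evaluation maps $f\mapsto f(x)$ for $x\in\overline U_{j+1}\setminus U_j$, or equivalently defined by the closed condition ${\rm Supp}(f)\subset\overline U_j$). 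Now a sequence converging in the locally convex space $C_0(X)$ is bounded there, so $\{f_k:k\in\mathbb N\}\subset C_0(\overline U_{j_0};X)$ for some $j_0$.

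To finish, I would appeal to Lemma~\ref{aux1}: the topology induced by $\mathcal T$ on $C_0(\overline U_{j_0};X)$ is precisely $\mathcal T_{\overline U_{j_0}}$. Therefore $f_k\to 0$ in $\mathcal T$ entails $f_k\to 0$ uniformly on $\overline U_{j_0}$, and setting $K:=\overline U_{j_0}$ gives (ii). The principal obstacle is the second implication, and specifically identifying and citing the correct Bourbaki statement ensuring that convergent sequences in a strict inductive limit are supported, uniformly in $k$, in one step of the filtration; once that is in place, everything else is formal.
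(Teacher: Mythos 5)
Your proof is correct, and its overall architecture coincides with the paper's: the implication (ii)$\Rightarrow$(i) is immediate from the continuity of the canonical injections (equivalently, from Lemma~\ref{aux1}), while (i)$\Rightarrow$(ii) reduces to showing that a convergent, hence bounded, sequence has all its supports contained in a single compact set, after which Lemma~\ref{aux1} upgrades $\mathcal T$-convergence to uniform convergence there. The one genuine difference is the tool used for that reduction: the paper quotes Bourbaki, \emph{Integration}, Section~III.1, Proposition~2(ii) --- the description of bounded subsets of $C_0(X)$ over a paracompact locally compact space, paracompactness being supplied by $X$ being locally compact and of class $K_\sigma$ --- whereas you invoke the general Dieudonn\'e--Schwartz property of strict inductive limits of sequences with closed steps. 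Both are legitimate, and in effect the paper's citation is a concrete instance of yours; your route costs the extra (easy) verification that $C_0(\overline{U}_j;X)$ is closed in $C_0(\overline{U}_{j+1};X)$, which you do supply. Two small points: within $C_0(\overline{U}_{j+1};X)$, the subspace $C_0(\overline{U}_j;X)$ is cut out by the vanishing of the evaluations at the points of $\overline{U}_{j+1}\setminus\overline{U}_j$, not of $\overline{U}_{j+1}\setminus U_j$ (a function with support in $\overline{U}_j$ need not vanish at a point of $\partial U_j$ lying in the interior of $\overline{U}_j$); closedness holds regardless, since the condition ${\rm Supp}(f)\subset\overline{U}_j$ is stable under pointwise limits. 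Also, the Bourbaki reference for the bounded-sets theorem should be double-checked: Section~II.4.6 of \emph{Topological Vector Spaces} contains the completeness statement (Proposition~9) used in the paper, while the Dieudonn\'e--Schwartz result on bounded subsets of a strict inductive limit is stated in the chapter on bornologies.
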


\begin{proof}Assume first that a sequence $(f_k)\subset C_0(X)$ converges to $0$ in the topology $\mathcal T$.
The set $\{f_k: k\in\mathbb N\}$ being bounded in $\mathcal T$, an application of \cite[Section~III.1, Proposition~2(ii)]{B2} shows that there is a compact set $K\subset X$ such that ${\rm Supp}(f_k)\subset K$ for all $k$. (Note that the cited Proposition can be applied here, for a locally compact space of class $K_\sigma$ is paracompact by \cite[Section~I.9, Theorem~5]{B1}.) Applying now Lemma~\ref{aux1} we therefore conclude that the sequence $(f_k)$ also converges to $0$ in the (Hausdorff) topology $\mathcal T_K$; and so indeed, (i) implies (ii). As the topology $\mathcal T$ is Hausdorff as well, the opposite implication follows directly from Lemma~\ref{aux1}.\end{proof}

\begin{remark}It is worth noting here that if a locally compact space $X$ is  {\it noncompact\/} and of class $K_\sigma$, then the strict inductive topology $\mathcal T$ is {\it strictly finer\/} than the topology of uniform convergence on $X$ (see \cite[Section~II.4, p.~29, Example]{B4}).\end{remark}

\begin{lemma}\label{foot-conv'}If a locally compact space\/ $X$ has a countable base, then there exists a countable set\/ $S\subset C_0(X)$ which is dense in\/ $C_0(X)$ equipped with the strict inductive limit topology\/ $\mathcal T$.\end{lemma}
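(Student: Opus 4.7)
The plan is to exploit the $K_\sigma$ structure furnished by second countability. By \cite[Section~IX.2, Corollary to Proposition~16]{B3}, the space $X$ is metrizable, and by \cite[Section~I.9, Proposition~15]{B1} there exists an increasing sequence of relatively compact open sets $(U_j)$ with $\overline{U}_j\subset U_{j+1}$ and $\bigcup_j U_j=X$. As noted in the paragraph preceding Lemma~\ref{foot-conv}, the space $C_0(X)$ is then the strict inductive limit of the subspaces $C_0(\overline{U}_j;X)$ equipped with the uniform topologies $\mathcal T_{\overline{U}_j}$.

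First I would produce, for each fixed $j$, a countable $\mathcal T_{\overline{U}_j}$-dense subset $S_j\subset C_0(\overline{U}_j;X)$. Indeed, since $\overline{U}_j$ is compact and metrizable (as a subspace of the metrizable $X$), the Banach space $C(\overline{U}_j)$ endowed with the sup-norm is separable. The restriction map $f\mapsto f|_{\overline{U}_j}$ realizes $(C_0(\overline{U}_j;X),\mathcal T_{\overline{U}_j})$ isometrically as a closed subspace of $C(\overline{U}_j)$ (the closedness follows because continuity of $f$ on $X$ forces $f$ to vanish on the topological boundary of $\overline{U}_j$). A closed subspace of a separable metric space being separable, we may pick the desired countable $\mathcal T_{\overline{U}_j}$-dense $S_j$.

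Next, I would set $S:=\bigcup_{j\in\mathbb N}S_j$, which is a countable subset of $C_0(X)$, and verify that it is $\mathcal T$-dense in $C_0(X)$. Take any $f\in C_0(X)$. Since $\mathrm{Supp}(f)$ is compact and is covered by the increasing open family $(U_j)$, there exists $j_0$ with $\mathrm{Supp}(f)\subset U_{j_0}\subset\overline{U}_{j_0}$; in particular $f\in C_0(\overline{U}_{j_0};X)$. By density of $S_{j_0}$, we may choose a sequence $(f_k)\subset S_{j_0}$ converging to $f$ in $\mathcal T_{\overline{U}_{j_0}}$. Invoking Lemma~\ref{aux1}, the topology induced on $C_0(\overline{U}_{j_0};X)$ by $\mathcal T$ coincides with $\mathcal T_{\overline{U}_{j_0}}$, so $f_k\to f$ also in $\mathcal T$. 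Hence $f$ lies in the $\mathcal T$-closure of $S$, completing the proof.

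I do not foresee a genuine obstacle: the essential content is the sup-norm separability of $C(\overline{U}_j)$ (available thanks to metrizability of $X$), together with the fact that density with respect to the intrinsic topology $\mathcal T_{\overline{U}_j}$ translates into $\mathcal T$-density in $C_0(X)$, which is precisely what Lemma~\ref{aux1} supplies. The only subtlety worth emphasizing is that although $\mathcal T$ is strictly finer than the topology of uniform convergence on $X$ (as mentioned in the remark following Lemma~\ref{foot-conv}), one is not forced to approximate $f$ globally in sup norm; the support localization via compactness of $\mathrm{Supp}(f)$ allows one to work inside a single $C_0(\overline{U}_{j_0};X)$, where sup-norm approximation suffices.
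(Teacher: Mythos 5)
Your argument is correct, but it takes a genuinely different route from the paper. The paper simply invokes the Lemma in \cite[Section~V.3.1]{B2}, which for a second-countable locally compact $X$ directly supplies a countable $S\subset C_0(X)$ such that every $f\in C_0(X)$ admits approximants $f_j\in S$ with $|f_j-f|<\varepsilon\varphi$ for a fixed positive $\varphi\in S$; it then observes that this forces all the $f_j$ to live in a single $C_0(K;X)$ and to converge uniformly there, so that Lemma~\ref{aux1} upgrades the convergence to $\mathcal T$. You instead build $S$ by hand: you exhaust $X$ by the relatively compact open sets $U_j$, use metrizability of $X$ to get separability of the sup-norm space $C(\overline{U}_j)$, extract a countable dense $S_j\subset C_0(\overline{U}_j;X)$, and take the union; the passage from $\mathcal T_{\overline{U}_{j_0}}$-density to $\mathcal T$-density is then the same appeal to Lemma~\ref{aux1}, after localizing $\mathrm{Supp}(f)$ into some $U_{j_0}$ by compactness. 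Your version is more self-contained (its only nontrivial input is the classical separability of $C(K)$ for compact metrizable $K$), at the cost of being longer; the paper's version outsources the construction to Bourbaki. One small remark: your parenthetical justification of closedness of the image of the restriction map is incomplete as stated (vanishing on $\partial\overline{U}_j$ shows the image is \emph{contained} in a closed subspace; to get equality one also needs that extension by zero of such a $g$ is continuous on $X$), but this does not matter, since every subset of a separable metric space is separable, so $S_j$ exists regardless.
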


\begin{proof}According to \cite[Section~V.3.1, Lemma]{B2}, for a locally compact sec\-ond-count\-able space $X$ there exists a countable set $S\subset C_0(X)$ having the following
property: for every function $f\in C_0(X)$, there exist a sequence $(f_j)$ of
elements of $S$ and a positive function $\varphi\in S$ such that, for every number
$\varepsilon>0$,
\[|f_j-f|<\varepsilon\varphi\text{ \ provided $j$ is large enough}.\]
Observing that for these $f\in C_0(X)$ and $(f_j)\subset S$, there exists a compact set $K\subset X$ such that all the $f$ and $f_j$ equal $0$ on $K^c$, while $f_j\to f$ uniformly on $K$, we then conclude by applying Lemma~\ref{aux1} (or Lemma~\ref{foot-conv}) that $f_j\to f$ also in $\mathcal T$.
\end{proof}

\subsection{On the vague topology on the space $\mathfrak M(X)$} We shall now discuss some specific properties of the space $\mathfrak M(X)$ for $X$ satisfying the second axiom countability.

\begin{lemma}\label{first-count}If a locally compact space\/ $X$ has a countable base, then every\/ $\xi\in\mathfrak M(X)$ has a countable base of vague neighborhoods.\end{lemma}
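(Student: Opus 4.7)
The plan is to build a countable neighborhood base at $\xi$ out of the countable set $S \subset C_0(X)$ supplied by Lemma~\ref{foot-conv'}, which is dense in $C_0(X)$ for the inductive limit topology $\mathcal{T}$. Specifically, I would propose the countable family
\[\mathcal{B}_\xi := \Bigl\{\bigl\{\nu \in \mathfrak{M}(X) : |\nu(g_i)-\xi(g_i)|<1/m,\ i=1,\dots,n\bigr\} : n,m\in\mathbb{N},\ g_1,\dots,g_n\in S\Bigr\},\]
each member of which is patently a vague neighborhood of $\xi$. It remains to verify that every basic vague neighborhood $V(\xi;f_1,\dots,f_k;\varepsilon):=\{\nu\in\mathfrak M(X):|\nu(f_j)-\xi(f_j)|<\varepsilon,\ j=1,\dots,k\}$ of $\xi$ contains some element of $\mathcal{B}_\xi$.

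To prove the latter, I would, for each $f_j\in C_0(X)$, combine density of $S$ in $\mathcal{T}$ with Lemma~\ref{foot-conv} to select $g_j\in S$ such that $\mathrm{Supp}(g_j)$, together with $\mathrm{Supp}(f_j)$, lies in a single compact set $K_j\subset X$, while $\|f_j-g_j\|_\infty$ is as small as desired. Since $\xi$ is $\mathcal{T}$-continuous as a Radon measure, $|\xi(g_j)-\xi(f_j)|$ may then be made arbitrarily small as well. Writing
\[\nu(f_j)-\xi(f_j)=\bigl[\nu(g_j)-\xi(g_j)\bigr]+(\nu-\xi)(f_j-g_j),\]
the first bracket is controlled by membership of $\nu$ in the candidate element of $\mathcal{B}_\xi$, once $m$ has been chosen large enough.

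The main obstacle is the second term $(\nu-\xi)(f_j-g_j)$: to estimate it by $\|f_j-g_j\|_\infty\cdot|\nu-\xi|(K_j)$ one needs local control on the total variation of $\nu-\xi$, which vague proximity to $\xi$ does not by itself provide in the signed case. My plan to overcome this is to adjoin to the defining conditions of each $W\in\mathcal{B}_\xi$ finitely many additional constraints of the form $|\nu(\phi)-\xi(\phi)|<1/m$, where $\phi\in S$ runs over nonnegative cut-off functions dominating the indicators of an exhausting sequence of compact sets (such cut-offs, originally obtained via Urysohn, can be $\mathcal{T}$-approximated by elements of $S$, so up to a harmless enlargement of $S$ they may be taken in $S$ itself). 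Combined with a careful calibration of $\|f_j-g_j\|_\infty$ and of $m$, these extra test functions should supply the missing local control on $\nu-\xi$ and close the containment $W\subset V(\xi;f_1,\dots,f_k;\varepsilon)$, finishing the proof.
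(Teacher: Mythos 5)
You have put your finger on exactly the right difficulty --- the term $(\nu-\xi)(f_j-g_j)$ --- but the patch you propose does not close it in the generality in which the lemma is stated. For \emph{positive} $\nu$, a constraint $|\nu(\phi)-\xi(\phi)|<1/m$ with $\phi\geqslant 1_K$ does bound $\nu(K)$, and your three-term estimate then goes through; this is precisely the mechanism the paper itself uses later, in the proof of Theorem~\ref{l-count}, via the function $\varphi_K\geqslant 1_K$ and the mass bound (\ref{est}). But for \emph{signed} $\nu\in\mathfrak M(X)$, smallness of $|\nu(\phi)-\xi(\phi)|$ gives no control whatsoever on $|\nu|(K)$ or $|\nu-\xi|(K)$: the positive and negative parts of $\nu$ may be arbitrarily large and cancel against every one of your finitely many test functions. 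So the containment $W\subset V(\xi;f_1,\dots,f_k;\varepsilon)$ cannot be established this way for signed measures.

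Moreover, this is not a repairable defect of your argument: the statement is actually false for the full signed space $\mathfrak M(X)$ whenever $X$ contains an infinite compact set (hence for all the spaces of interest here, e.g.\ $X=\mathbb R^n$). Indeed, if $\xi$ had a countable base $\{W_k\}$ of vague neighborhoods, each $W_k$ would contain $\xi+N_k$, where $N_k$ is the common kernel of finitely many functionals $\nu\mapsto\nu(f_{k,i})$; since for every $g\in C_0(X)$ the neighborhood $\{\nu:|(\nu-\xi)(g)|<1\}$ must contain some $W_k$, the subspace $N_k$ lies in the kernel of $\nu\mapsto\nu(g)$, whence $g\in\mathrm{span}\{f_{k,1},\dots,f_{k,n_k}\}$. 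Thus $C_0(X)$ would have countable Hamel dimension, which is impossible since it contains the infinite-dimensional Banach space $C_0(K;X)$. The paper's own proof tacitly restricts to the cone $\mathfrak M^+(X)$ (its displayed neighborhoods are subsets of $\mathfrak M^+$, built from the tent functions $\bigl(1-k\varrho_X(x_k,x)\bigr)^+$ centered at a countable dense set of points of the metrizable space $X$), and that is also the only setting in which the lemma is used in the sequel. Restricted to $\mathfrak M^+$, your construction is sound and, modulo the small amount of care needed to produce $\phi\in S$ dominating $1_K$, amounts to essentially the same argument as the paper's proof of Theorem~\ref{l-count}; you should state and prove the lemma for $\mathfrak M^+(X)$ only.
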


\begin{proof} Being metrizable (with a metric $\varrho_X$) and of class $K_\sigma$ \cite[Section~IX.2, Corollary to Proposition~16]{B3}, a locally compact se\-cond-count\-able space $X$ has a countable dense subset $(x_k)\subset X$ \cite[Section~IX.2, Proposition~12]{B3}. Therefore,
\[\Bigl\{\nu\in\mathfrak M^+: \ \int\bigl(1-k\varrho_X(x_k,x)\bigr)^+\,d|\nu-\nu_0|(x)<
1/k\Bigr\},\quad k\in\mathbb N,\]
form a countable base of vague neighborhoods of $\nu_0\in\mathfrak M^+$. (Here $|\xi|:=\xi^++\xi^-$.)\end{proof}

Thus, under assumption $(\mathcal A_1)$, the vague topology on the space $\mathfrak M$ can be described in terms of sequences, and the use of nets or filters may often be avoided. In particular, $\nu_0\in\mathfrak M^+$ belongs to the vague closure of $\mathfrak B\subset\mathfrak M^+$ if and only if there is a sequence $(\nu_k)\subset\mathfrak B$ converging vaguely to $\nu_0$. Similarly, any vaguely bounded part of $\mathfrak M^+$ contains a vaguely convergent sequence, cf. \cite[Section~III.1, Proposition~15]{B2}.

We shall now show that in the case where $(\mathcal A_2)$ holds along with $(\mathcal A_1)$, the vague convergence of a sequence $(\nu_k)\subset\mathfrak M^+$ needs only to be verified for certain {\it countably many\/} test functions $\varphi\in C_0(X)$ that are independent of the choice of $(\nu_k)$.

\subsection{Proof of Theorem~\ref{l-count}}\label{count-proof} Assume that $(\mathcal A_1)$ and $(\mathcal A_2)$ both hold. The following two observations are crucial to the proof provided below.

The first is that, in view of $(\mathcal A_1)$, a locally compact space $X$ must be of class $K_\sigma$, and hence, according to Lemma~\ref{foot-conv}, convergence of any sequence $(f_k)\subset C_0$ in the inductive limit topology $\mathcal T$ is reduced to the uniform convergence on some compact subset of $X$, depending on $X$ and $(f_k)$ only.

The second is that, according to Lemma~\ref{foot-conv'}, there exist countably many functions $g_m\in C_0$, $m\in\mathbb N$, depending on $X$ only, which form a dense subset of the space $C_0$ equipped with the inductive limit topology $\mathcal T$.

But, according to $(\mathcal A_2)$, the set $C_0\cap\{\kappa\lambda:\ \lambda\in\mathcal E\}$ is also dense in $C_0$ (in $\mathcal T$). Hence, for every $g_m$ there is a sequence $(\lambda_{g_m}^p)_{p\in\mathbb N}\subset\mathcal E$ such that $(\kappa\lambda_{g_m}^p)_{p\in\mathbb N}\subset C_0$ while
\[\kappa\lambda_{g_m}^p\to g_m\text{ \ in $\mathcal T$ as $p\to\infty$.}\]
Therefore,
\begin{equation}\mathcal E^\circ:=\bigl\{\lambda_{g_m}^p: \ m,p\in\mathbb N\bigr\}\label{e}\end{equation}
is a countable subset of $\mathcal E$, depending on $X$ and $\kappa$ only, and moreover
\[C_0^\circ:=\{\kappa\lambda:\ \lambda\in\mathcal E^\circ\}\]
is a dense subset of the space $C_0$ (equipped with the topology $\mathcal T$).

We assert that the set $\mathcal E^\circ$ thus defined is as claimed. Given a sequence $(\nu_k)\subset\mathfrak M^+$, we need to prove that $\nu_k\to\nu_0$ vaguely if (and only if)
\[\lim_{k\to\infty}\,\int\kappa\lambda\,d\nu_k=\int\kappa\lambda\,d\nu_0\text{ \ for all $\lambda\in\mathcal E^\circ$},\]
or equivalently if (and only if)
\begin{equation}\label{dens}\nu_k(\varphi)\to\nu_0(\varphi)\text{ \ for all $\varphi\in C_0^\circ$}.\end{equation}

To this end, we shall first show that for any compact $K\subset X$,
\begin{equation}\label{est}\sup_{k\in\mathbb N}\,\nu_k(K)<\infty.\end{equation} In fact, since $C_0^\circ$ is dense in $C_0$ (in $\mathcal T$), there is $\varphi_K\in C_0^\circ$ that is ${}\geqslant1_K$ on $X$; this can easily be seen with the aid of the Tie\-tze--Ury\-sohn
extension theorem \cite[Theorem~0.2.13]{E2} (applied to the normal space $X$) and Lemma~\ref{foot-conv} (cf.\ the first observation at the beginning of the present proof). Therefore, by (\ref{dens}),
\[\limsup_{k\to\infty}\,\nu_k(K)=\limsup_{k\to\infty}\,\int1_K\,d\nu_k\leqslant\lim_{k\to\infty}\,\nu_k(\varphi_K)=\nu_0(\varphi_K)<\infty,\]
and (\ref{est}) follows.

For any given $f\in C_0$, choose now a sequence $(\varphi_j)\subset C_0^\circ$ converging to $f$ in $\mathcal T$. According to Lemma~\ref{foot-conv}, there exists a compact subset $K$ of $X$ such that all the $f$ and $\varphi_j$ equal zero on $K^c$, while $\varphi_j\to f$ uniformly on $K$ as $j\to\infty$. Hence
\begin{equation}\label{sum}|\nu_k(f)-\nu_0(f)|\leqslant|\nu_k(f)-\nu_k(\varphi_j)|+|\nu_k(\varphi_j)-\nu_0(\varphi_j)|+|\nu_0(\varphi_j)-\nu_0(f)|,\end{equation}
where
\[|\nu_k(f)-\nu_k(\varphi_j)|\leqslant\int|f-\varphi_j|\,d\nu_k\leqslant\sup_{k\in\mathbb N}\,\nu_k(K)\max_{K}\,|f-\varphi_j|\]
and
\[|\nu_0(\varphi_j)-\nu_0(f)|\leqslant\nu_0(K)\max_{K}\,|f-\varphi_j|,\]
and so the first and the third summands in (\ref{sum}) approach zero as $j\to\infty$, cf.\ (\ref{est}).
Therefore, taking first $j$ sufficiently large, and then applying (\ref{dens}) to $\varphi_j$, we obtain
\[|\nu_k(f)-\nu_0(f)|\to0\text{ \ as $k\to\infty$}.\] This establishes the vague convergence of $(\nu_k)$ to $\nu_0$, whence Theorem~\ref{l-count}.

\section{Proofs of Theorems~\ref{th-m} and \ref{l-alt-countt}}\label{sec-last-1}

In the rest of the paper, the energy, consistency, and domination principles are again required to hold.

\subsection{Proof of Theorem~\ref{th-m}} Theorem~\ref{th-m} is a particular case of the following stronger assertion.

\begin{theorem}\label{cor-m} Assume that\/ $(\mathcal A_1)$ and\/ $(\mathcal A_2)$ both hold. For arbitrary\/ $A\subset X$, there exists a\/ $K_\sigma$-set\/ $A_0\subset A$ such that, for every\/ $Q$ with the property\/ $A_0\subset Q\subset A$,
\begin{equation}\label{eq-main1c}\mu^A=\mu^{Q}=\mu^{A_0}\text{ \ for all\/ $\mu\in\mathcal E$.}\end{equation}
Furthermore, for every Borel\/ $B$ such that\/ $A_0\subset B\subset A$,
\begin{equation}\label{eq-main1cor}\mu^A=\mu^{*B}=\mu^{*A_0}\text{ \ for all\/ $\mu\in\mathcal E$.}\end{equation}
\end{theorem}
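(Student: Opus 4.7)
The plan is to combine the countable characterization of balayage given by Theorem~\ref{l-alt-countt} with an inner approximation of $A$ rich enough to preserve the balayage of every measure in a suitable countable family. First I invoke Theorem~\ref{l-count} to fix a countable family $\mathcal E^\circ\subset\mathcal E$ depending only on $X$ and $\kappa$. Enumerating $\mathcal E^\circ=\{\lambda_j\}_{j\in\mathbb N}$ and decomposing each $\lambda_j=\lambda_j^+-\lambda_j^-$, I collect the resulting positive and negative parts into a countable set $\mathcal F\subset\mathcal E^+$.

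To build $A_0$, I exploit the convergence $\mu^K\to\mu^A$ in Theorem~\ref{th-intr''}(a). For each $\mu\in\mathcal F$, the strong topology on $\mathcal E^+$ being given by the energy norm, the definition of net convergence furnishes $K'_n\in\mathfrak C_A$ such that $\|\mu^K-\mu^A\|<1/n$ whenever $K\in\mathfrak C_A$ contains $K'_n$; setting $K^\mu_n:=K'_1\cup\dots\cup K'_n$ yields an increasing sequence in $\mathfrak C_A$ with $\mu^{K^\mu_n}\to\mu^A$ strongly. Then
\[A_0:=\bigcup_{\mu\in\mathcal F,\ n\in\mathbb N}K^\mu_n\]
is a $K_\sigma$-subset of $A$ depending on $A$, $X$, $\kappa$ only.

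The core step will be to verify $\mu^{A_0}=\mu^A$ for every $\mu\in\mathcal F$, and this will be the main obstacle: strong convergence of swept measures does not automatically yield pointwise convergence of their potentials. Since the strong topology on $\mathcal E^+$ is finer than the vague one, $\mu^{K^\mu_n}\to\mu^A$ vaguely, so the principle of descent (Lemma~\ref{lemma-semi}) applied to the l.s.c.\ function $x\mapsto\kappa(x,y)$ gives $\kappa\mu^A(y)\leqslant\liminf_n\kappa\mu^{K^\mu_n}(y)$ for every $y$. Together with the reverse inequality $\kappa\mu^{K^\mu_n}\leqslant\kappa\mu^A$ from Lemma~\ref{cor-mon}, this forces the increasing sequence $(\kappa\mu^{K^\mu_n})$ to converge pointwise up to $\kappa\mu^A$. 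Since $K^\mu_n\in\mathfrak C_{A_0}$, formula (\ref{later-bal1}) applied to $A_0$ gives $\kappa\mu^{A_0}\geqslant\kappa\mu^A$, while Lemma~\ref{cor-mon} combined with $A_0\subset A$ gives the opposite inequality. The energy principle then yields $\mu^{A_0}=\mu^A$, and linearity extends this to $\lambda^{A_0}=\lambda^A$ for every $\lambda\in\mathcal E^\circ$.

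With this in hand, Theorem~\ref{l-alt-countt} finishes the proof for arbitrary $\mu\in\mathcal E$: applying Theorem~\ref{l-alt} to $A_0$ and using $\lambda^A=\lambda^{A_0}$ gives
\[\kappa(\mu^{A_0},\lambda)=\kappa(\mu,\lambda^{A_0})=\kappa(\mu,\lambda^A)=\kappa(\lambda^A,\mu)\quad\text{for every }\lambda\in\mathcal E^\circ,\]
so $\xi:=\mu^{A_0}$ fulfils hypothesis (\ref{rel2}) and Theorem~\ref{l-alt-countt} identifies $\mu^{A_0}$ with $\mu^A$. For intermediate $Q$ with $A_0\subset Q\subset A$, Lemma~\ref{cor-mon} (first for positive $\mu$, then extended by linearity via the Hahn--Jordan decomposition) sandwiches $\kappa\mu^Q$ between the equal $\kappa\mu^{A_0}$ and $\kappa\mu^A$, forcing $\mu^Q=\mu^A$. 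Finally, $A_0$ being $K_\sigma$ hence Borel, Theorem~\ref{l-outer} (valid under $(\mathcal A_1)$) identifies $\mu^{*A_0}$ with $\mu^{A_0}$ and $\mu^{*B}$ with $\mu^B$ for any Borel $B\supset A_0$, and combining with the already-established (\ref{eq-main1c}) delivers (\ref{eq-main1cor}).
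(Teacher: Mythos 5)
Your proof is correct, and its overall skeleton coincides with the paper's: build, for each measure in the countable family obtained by splitting $\mathcal E^\circ$ into positive and negative parts, a $K_\sigma$-subset of $A$ preserving its balayage; take the countable union to get $A_0$; then transfer to arbitrary $\mu\in\mathcal E$ via the symmetry relation of Theorem~\ref{l-alt} and the separating property of $\mathcal E^\circ$ from Theorem~\ref{l-count}; finally handle intermediate $Q$ by the monotonicity sandwich and Borel $B$ by Theorem~\ref{l-outer}. The one place where you genuinely diverge is the extraction of an increasing sequence $(K^\mu_n)\subset\mathfrak C_A$ with $\kappa\mu^{K^\mu_n}\uparrow\kappa\mu^A$ pointwise. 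The paper gets this directly from a topological lemma (Doob, Appendix~VIII, Theorem~2) stating that on a second-countable space an increasing net of l.s.c.\ functions admits a subsequence with the same pointwise supremum, applied to the net $(\kappa\mu^K)_{K\in\mathfrak C_A}$. You instead extract the sequence from the \emph{strong} convergence $\mu^K\to\mu^A$ of Theorem~\ref{th-intr''}(a) (a routine net-to-sequence argument in a normed space), and then recover the pointwise convergence of potentials by combining the principle of descent (via the induced vague convergence) with the upper bound $\kappa\mu^{K^\mu_n}\leqslant\kappa\mu^A$ from Lemma~\ref{cor-mon}. Both devices are sound; yours avoids the external l.s.c.-net lemma at the cost of leaning on the fact that the strong topology on $\mathcal E^+$ is finer than the vague one (valid here, the kernel being perfect), and it uses $(\mathcal A_1)$ only through the availability of $\mathcal E^\circ$, whereas the paper's route uses second countability also in Doob's lemma. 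A cosmetic remark: in the final identification you route through Theorem~\ref{l-alt-countt}, whose statement only asserts the existence of \emph{some} countable set; what you actually need (and implicitly use) is that the set there is the same $\mathcal E^\circ$ from Theorem~\ref{l-count}, so it is cleaner to invoke the uniqueness part of Theorem~\ref{l-count} directly, as the paper does.
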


\begin{proof} Fix $\mu\in\mathcal E$; on account of (\ref{def-sign}), we can certainly assume it to be positive. According to Theorem~\ref{th-intr''}(b), the net $(\kappa\mu^K)_{K\in\mathfrak C_A}$ then increases to $\kappa\mu^A$ pointwise on $X$, $K$ ranging over the upward directed set $\mathfrak C_A$ of all compact subsets of $A$. The space $X$ being Hausdorff and sec\-ond-count\-able while the functions $\kappa\mu^K$ being l.s.c.\ on $X$, we conclude by applying \cite[Appendix~VIII, Theorem~2]{Doob} that there exists an increasing sequence $(K^\mu_k)_{k\in\mathbb N}$ of compact subsets of $A$ having the property
\begin{equation*}\label{lim2}\kappa\mu^{K^\mu_k}\uparrow\kappa\mu^A\text{ \ pointwise on $X$ as $k\to\infty$}.\end{equation*}
Setting
\begin{equation}\label{prime}A_\mu':=\bigcup_{k\in\mathbb N}\,K^\mu_k,\end{equation}
we see, again by Theorem~\ref{th-intr''}(b), that
\begin{equation}\label{prime22}\kappa\mu^{A_\mu'}=\lim_{k\to\infty}\,\kappa\mu^{K^\mu_k}=\kappa\mu^A\text{ \ on\ }X,\end{equation}
and hence, by virtue of (\ref{en}),
\[\mu^{A_\mu'}=\mu^A.\]

Even more generally, for any set $L$ such that $A_\mu'\subset L\subset A$,
\begin{equation}\label{prime2}\mu^{A_\mu'}=\mu^L=\mu^A,\end{equation}
for
\[\kappa\mu^{A_\mu'}\leqslant\kappa\mu^L\leqslant\kappa\mu^A=\kappa\mu^{A_\mu'}\text{ \ on\ }X,\]
the inequalities being true by monotonicity property (\ref{mon}) while the equality by (\ref{prime22}).

We assert that the set $A_0$ we are looking for can be defined by means of the formula
\begin{equation}\label{def-A}A_0:=\bigcup_{\lambda\in\mathcal E^\circ}\,\bigl(A'_{\lambda^+}\cup A'_{\lambda^-}\bigr),\end{equation}
where $\mathcal E^\circ$ is the (countable) family of (signed) measures $\lambda\in\mathcal E$ appearing in Theorem~\ref{l-count} (see (\ref{e})), whereas $A'_{\lambda^\pm}$ is given by (\ref{prime}) with $\mu:=\lambda^\pm$.

In fact, $A_0$ is a $K_\sigma$-sub\-set of $A$, for so is every $A'_{\lambda^\pm}$, and moreover
\[(\lambda^\pm)^{A_0}=(\lambda^\pm)^A\text{ \ for all\ }\lambda\in\mathcal E^\circ,\]
by (\ref{prime2}) with $\mu:=\lambda^\pm$ and $L:=A_0$. In view of (\ref{def-sign}), this gives
\begin{equation*}\label{L}\lambda^{A_0}=\lambda^A\text{ \ for all\ }\lambda\in\mathcal E^\circ.\end{equation*}
Applying Theorem~\ref{l-alt} with $\nu:=\lambda\in\mathcal E^\circ$, we therefore obtain
\[\kappa(\mu^A,\lambda)=\kappa(\mu,\lambda^A)=\kappa(\mu,\lambda^{A_0})=\kappa(\mu^{A_0},\lambda)\text{ \ for all\ }\lambda\in\mathcal E^\circ,\]
and hence
\[\int\kappa\lambda\,d\mu^A=\int\kappa\lambda\,d\mu^{A_0}\text{ \ for all $\lambda\in\mathcal E^\circ$},\]
which according to the latter part of Theorem~\ref{l-count} implies the equality
\begin{equation}\label{3.1}\mu^A=\mu^{A_0}.\end{equation}

Similarly as above, we now conclude from (\ref{3.1}) by use of (\ref{mon}) that for any $Q$ such that $A_0\subset Q\subset A$,
\[\kappa\mu^{A_0}\leqslant\kappa\mu^Q\leqslant\kappa\mu^A=\kappa\mu^{A_0}\text{ \ on\ }X,\]
hence $\mu^{A_0}=\mu^{Q}=\mu^A$, again by (\ref{en}), and consequently (\ref{eq-main1c}), by noting that the set $A_0$ is independent of the choice of $\mu$ (see (\ref{def-A})).

According to Theorem~\ref{l-outer}, $\mu^B=\mu^{*B}$ for every Borel $B\subset X$, the space $X$ being second-countable. Combining this with (\ref{eq-main1c}) yields (\ref{eq-main1cor}).\end{proof}

\subsection{Proof of Theorem~\ref{l-alt-countt}}\label{1.7} Let $(\mathcal A_1)$ and $(\mathcal A_2)$ be both satisfied, and let $\mathcal E^\circ\subset\mathcal E$ be the set appearing in Theorem~\ref{l-count} (see (\ref{e})). Given $\mu\in\mathcal E$ and $A\subset X$, (\ref{rel1}) obviously holds by (\ref{alternative}) with $\nu:=\lambda\in\mathcal E^\circ$. To show that (\ref{rel1}) determines the inner balayage $\mu^A$ uniquely within $\mathcal E$, suppose now that (\ref{rel2}) takes place for some $\xi\in\mathcal E$. Subtracting (\ref{rel2}) from (\ref{rel1}) results in the relation
\[\int\kappa\lambda\,d\xi=\int\kappa\lambda\,d\mu^A\text{ \ for all $\lambda\in\mathcal E^\circ$},\]
which by the latter part of Theorem~\ref{l-count} gives $\xi=\mu^A$.
Finally, the remaining assertion of the theorem is obtained from the above by applying Theorem~\ref{l-outer}.

\section{Convergence of swept measures (application of Theorem~\ref{cor-m})}

\begin{theorem}\label{pr-cont}Assume that\/ $(\mathcal A_1)$ and\/ $(\mathcal A_2)$ are both satisfied,\footnote{As follows from the proof (see Step~1), in the case where $A\subset X$ is Borel, Theorem~\ref{pr-cont}(a) remains valid even if neither of $(\mathcal A_1)$ and $(\mathcal A_2)$ holds (cf.\ \cite[Theorem~4.9]{Z-arx1}).} and consider the exhaustion of arbitrary\/ $A\subset X$ by
\begin{equation*}\label{exh}A_j:=A\cap U_j,\quad j\in\mathbb N,\end{equation*}
$(U_j)$ being an increasing sequence of Borel
sets with the union\/ $X$. {\rm(}Such\/ $(U_j)$ exists, the topology on\/ $X$ having a countable base.{\rm)} Then the following two assertions hold.
\begin{itemize}\item[{\rm(a)}] For any\/ $\mu\in\mathcal E$,
\begin{align}\label{cont1}&\mu^{A_j}\to\mu^A\text{ \ strongly and vaguely},\\
\label{cont2}&\kappa\mu^{A_j}\to\kappa\mu^A\text{ \ pointwise q.e.\ on\ }X.
\end{align}
\item[{\rm(b)}]There are Borel sets\/ $B_j$ such that\/ $B_j\subset A_j$ and\/ $B_j\subset B_{j+1}$ for each\/ $j\in\mathbb N$, and for all\/ $\mu\in\mathcal E$,
\begin{align*}&\mu^{*B_j}\to\mu^A\text{ \ strongly and vaguely},\\
&\kappa\mu^{*B_j}\to\kappa\mu^A\text{ \ pointwise q.e.\ on\ }X.
\end{align*}
\end{itemize}
\end{theorem}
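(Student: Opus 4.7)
By linearity (\ref{def-sign}) we may assume $\mu\in\mathcal E^+$; note $A_j\subset A_{j+1}$ and $\bigcup_j A_j=A$. By the monotonicity property (Lemma~\ref{cor-mon}), $(\kappa\mu^{A_j})_{j\in\mathbb N}$ is pointwise increasing and dominated by $\kappa\mu^A$ on $X$. The first key step is to identify the pointwise limit with $\kappa\mu^A$ everywhere on $X$. Here I would reuse the argument from the proof of Theorem~\ref{cor-m}: second-countability of $X$ (from $(\mathcal A_1)$) combined with Theorem~\ref{th-intr''}(b) and \cite[Appendix~VIII, Theorem~2]{Doob} produces a sequence of compact sets $K_k\subset A$ with $\kappa\mu^{K_k}\uparrow\kappa\mu^A$ pointwise on $X$. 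As each $K_k$ is compact and $\bigcup_j U_j=X$, one has $K_k\subset U_j$ (hence $K_k\subset A_j$) for all sufficiently large $j$, so by (\ref{mon}), $\kappa\mu^{K_k}\leqslant\kappa\mu^{A_j}\leqslant\kappa\mu^A$. Taking $j\to\infty$ first and then $k\to\infty$ yields $\kappa\mu^{A_j}\uparrow\kappa\mu^A$ pointwise on $X$.

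For the strong convergence, I would exploit the idempotency $(\mu^A)^{A_j}=\mu^{A_j}$, obtained from Theorem~\ref{th-intr}(a): since $\kappa\mu^A=\kappa\mu$ n.e.\ on $A\supset A_j$, both $(\mu^A)^{A_j}$ and $\mu^{A_j}$ are the unique element of $\mathcal E'_{A_j}$ whose potential equals $\kappa\mu$ n.e.\ on $A_j$. Together with symmetry (\ref{alternative}) and $(\mu^{A_j})^{A_j}=\mu^{A_j}$, this gives
\[\kappa(\mu^A,\mu^{A_j})=\kappa(\mu,(\mu^A)^{A_j})=\kappa(\mu,\mu^{A_j})=\kappa(\mu,(\mu^{A_j})^{A_j})=\|\mu^{A_j}\|^2,\]
so $\|\mu^A-\mu^{A_j}\|^2=\|\mu^A\|^2-\|\mu^{A_j}\|^2$. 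The pointwise convergence from the preceding paragraph combined with monotone convergence applied to the measure $\mu$ then yields $\|\mu^{A_j}\|^2=\int\kappa\mu^{A_j}\,d\mu\uparrow\int\kappa\mu^A\,d\mu=\|\mu^A\|^2$, establishing the strong convergence. Vague convergence of $\mu^{A_j}\to\mu^A$ follows because strongly bounded subsets of $\mathcal E^+$ are vaguely relatively compact \cite[Lemma~2.5.1]{F1} and the vague topology is Hausdorff. Extension to signed $\mu$ is by linearity; the q.e.\ pointwise convergence of potentials then follows by decomposing $\mu=\mu^+-\mu^-$ and discarding the set $\{\kappa(\mu^+)^A=+\infty\}\cup\{\kappa(\mu^-)^A=+\infty\}$, which has outer capacity zero.

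For part (b), I would apply Theorem~\ref{cor-m} to obtain a $K_\sigma$-set $A_0\subset A$, independent of $\mu$, such that $\mu^A=\mu^{A_0}=\mu^{*A_0}$ for every $\mu\in\mathcal E$. Setting $B_j:=A_0\cap U_j$ produces an increasing sequence of Borel sets with $B_j\subset A_j$ and $\bigcup_j B_j=A_0$. Applying part (a) to $A_0$ with the exhaustion $(B_j)$ yields $\mu^{B_j}\to\mu^{A_0}=\mu^A$ strongly and vaguely and $\kappa\mu^{B_j}\to\kappa\mu^A$ pointwise q.e.\ on $X$; finally Theorem~\ref{l-outer} gives $\mu^{*B_j}=\mu^{B_j}$ for each (Borel) $B_j$, completing the argument. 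The main obstacle will be the strong convergence in (a): its proof hinges on the idempotency $(\mu^A)^{A_j}=\mu^{A_j}$ together with symmetry (\ref{alternative}), which collapses the cross term $\kappa(\mu^A,\mu^{A_j})$ to $\|\mu^{A_j}\|^2$ and reduces strong convergence to the one-dimensional limit $\|\mu^{A_j}\|\to\|\mu^A\|$, itself obtained by monotone convergence from the pointwise convergence of potentials.
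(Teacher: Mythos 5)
Your argument for part (b) and your strong-convergence mechanism (the idempotency $(\mu^A)^{A_j}=\mu^{A_j}$ combined with the symmetry relation (\ref{alternative}), which collapses $\kappa(\mu^A,\mu^{A_j})$ to $\|\mu^{A_j}\|^2$ and reduces everything to $\|\mu^{A_j}\|\to\|\mu^A\|$) are correct and genuinely different from the paper, which instead runs a projection argument: it shows $(\mu^{A_j})$ is strong Cauchy via \cite[Lemma~4.1.1]{F1} and identifies the limit by proving $\varrho(\mu,\mathcal E^+_{A_j})\to\varrho(\mu,\mathcal E^+_A)$ through the vague and energy convergence of the traces $\nu|_{A_j}\to\nu$ for $\nu\in\mathcal E^+_A$. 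Your route is arguably slicker where it applies. But it rests entirely on the first step, and that step has a genuine gap.

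The gap: you claim that each compact $K_k\subset A$ satisfies $K_k\subset U_j$ for all sufficiently large $j$, "since $K_k$ is compact and $\bigcup_j U_j=X$". That absorption property holds when the $U_j$ are \emph{open} (by compactness), but the theorem only assumes the $U_j$ are \emph{Borel}. An increasing sequence of Borel sets with union $X$ need not eventually contain a given compact set: on $X=\mathbb R$ take $U_j=(\mathbb R\setminus[0,1])\cup\bigl([0,1]\setminus\{1/m:m>j\}\bigr)$; then $\bigcup_j U_j=\mathbb R$ but $[0,1]\not\subset U_j$ for every $j$. Without this containment you do not get $\kappa\mu^{K_k}\leqslant\kappa\mu^{A_j}$ for large $j$, hence no lower bound $\sup_j\kappa\mu^{A_j}\geqslant\kappa\mu^A$, and then the monotone-convergence step $\int\kappa\mu^{A_j}\,d\mu\uparrow\int\kappa\mu^A\,d\mu$ — on which your strong convergence, your q.e.\ convergence, and ultimately part (b) all depend — is unavailable. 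The paper avoids this entirely by a purely measure-theoretic fact: for $\nu\in\mathcal E^+_A$, $\nu(A\setminus A_j)=\nu(A\cap U_j^c)\downarrow0$ by countable additivity, so $\nu|_{A_j}\to\nu$ vaguely and in energy regardless of any topological structure of the $U_j$. To repair your proof you would either have to restrict to open $U_j$ (weakening the theorem) or replace your first step by an argument of the paper's type.
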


\begin{proof} Recall that for any $\nu\in\mathcal E$, $\kappa\nu$ is well defined and finite q.e.\ on $X$ (Sect.~\ref{sec}). On account of definition (\ref{def-sign}) and the countable subadditivity of outer capacity \cite[Lemma~2.3.5]{F1}, we can therefore assume throughout the proof that $\mu$ is positive.

The proof will be given in three steps.\smallskip

{\bf Step~1.} Suppose first that $A$ is Borel; then so are all the sets $A_j$. Given $\mu\in\mathcal E^+$, the inner balayage $\mu^{A_j}$ is, in fact, the orthogonal projection of $\mu$ onto $\mathcal E_{A_j}'$, the strong closure of $\mathcal E^+_{A_j}$, cf.\ Theorem~\ref{th-intr}(b). Thus $\mu^{A_j}\in\mathcal E_{A_j}'$ and
\[\|\mu-\mu^{A_j}\|=\min_{\nu\in\mathcal E_{A_j}'}\,\|\mu-\nu\|=\varrho(\mu,\mathcal E_{A_j}'),\]
where
\[\varrho(\mu,\mathcal B):=\inf_{\nu\in\mathcal B}\,\|\mu-\nu\|\text{ \ for\ }\mathcal B\subset\mathcal E^+.\]
Since obviously $\mathcal E_{A_j}'\subset\mathcal E_{A_p}'\subset\mathcal E_A'$ for any $j\leqslant p$, applying \cite[Lemma~4.1.1]{F1} with $\mathcal H:=\mathcal E$, $\Gamma:=\{\mu-\nu:\ \nu\in\mathcal E_{A_p}'\}$, and $\lambda:=\mu-\mu^{A_p}$ yields
\begin{equation}\label{Last}\|\mu^{A_j}-\mu^{A_p}\|^2=\|(\mu-\mu^{A_j})-(\mu-\mu^{A_p})\|^2\leqslant\|\mu-\mu^{A_j}\|^2-\|\mu-\mu^{A_p}\|^2.\end{equation}
Being decreasing and lower bounded, the sequence $(\|\mu-\mu^{A_j}\|)$ is Cauchy in $\mathbb R$, which together with (\ref{Last}) shows that the sequence $(\mu^{A_j})$ is strong Cauchy in $\mathcal E^+$, and hence it converges strongly and vaguely to the unique $\mu_0\in\mathcal E_A'$, $\mathcal E_A'$ being strongly closed while $\mathcal E^+$ strongly complete. This implies that
\begin{align}\notag\varrho(\mu,\mathcal E_A^+)&=\varrho(\mu,\mathcal E_A')\leqslant\|\mu-\mu_0\|=\lim_{j\to\infty}\,\|\mu-\mu^{A_j}\|\\{}&=\lim_{j\to\infty}\,\varrho(\mu,\mathcal E_{A_j}')=
\lim_{j\to\infty}\,\varrho(\mu,\mathcal E^+_{A_j}),\label{stream}\end{align}
the first and last equalities being evident by definition.

The sets $A_j$ being universally measurable, for every $\nu\in\mathcal E_A^+$ we get
\[\lim_{j\to\infty}\,\nu|_{A_j}(f)=\lim_{j\to\infty}\,\int1_{A_j}f\,d\nu=\int1_Af\,d\nu=\nu_A(f)=\nu(f)\text{ \ for all\ }f\in C_0^+,\]
where the second equality holds by \cite[Section~IV.1, Theorem~3]{B2}. (Here, as usual, $C_0^+$ stands for the cone of all positive $f\in C_0$.)
Hence $\nu|_{A_j}\to\nu$ vaguely as $j\to\infty$, and therefore, by the principle of descent,
\begin{equation}\label{dis}\|\nu\|\leqslant\liminf_{j\to\infty}\,\|\nu|_{A_j}\|,\quad\kappa(\mu,\nu)\leqslant\liminf_{j\to\infty}\,\kappa(\mu,\nu|_{A_j}).\end{equation}
On the other hand, since $\nu|_{A_{j+1}}-\nu|_{A_j}\geqslant0$ for every $j$, 
\[\|\nu\|\geqslant\limsup_{j\to\infty}\,\|\nu|_{A_j}\|,\quad\kappa(\mu,\nu)\geqslant\limsup_{j\to\infty}\,\kappa(\mu,\nu|_{A_j}),\]
which together with (\ref{dis}) implies that
\[\|\mu-\nu\|=\lim_{j\to\infty}\,\|\mu-\nu|_{A_j}\|\geqslant\lim_{j\to\infty}\,\varrho(\mu,\mathcal E^+_{A_j})\text{ \ for every\ }\nu\in\mathcal E^+_A,\]
and consequently
\[\varrho(\mu,\mathcal E_A^+)\geqslant\lim_{j\to\infty}\,\varrho(\mu,\mathcal E^+_{A_j}).\]
Combining this with (\ref{stream}) proves that $\mu_0$, the strong and vague limit of the sequence $(\mu^{A_j})$, is equal to the orthogonal projection of $\mu$ onto $\mathcal E_A'$, and hence, in fact, to the inner balayage $\mu^A$, cf.\ Theorem~\ref{th-intr}(b). This establishes (\ref{cont1}) for Borel $A$.\smallskip

{\bf Step~2.} Let $A$ now be arbitrary. As an application of Theorem~\ref{cor-m}, observe the existence of $K_\sigma$-sets $A'\subset A$ and $A_j'\subset A_j$, $j\in\mathbb N$, such that $A_j'\subset A_{j+1}'$ and moreover,
\[\mu^{A'}=\mu^A,\quad\mu^{A_j'}=\mu^{A_j}.\]
Writing $\check{A}_j:=A'\cap U_j$, we have $\check{A}_j\subset A_j$ and hence, again by Theorem~\ref{cor-m},
\begin{equation}\label{contt22}\mu^{A_j'\cup\check{A}_j}=\mu^{A_j'}=\mu^{A_j}\text{ \ for all\ }j\in\mathbb N.\end{equation}
Similarly,
\begin{equation}\label{contt11}\mu^{\tilde{A}}=\mu^{A'}=\mu^A,\end{equation}
where
\[\tilde{A}:=\bigcup_{j\in\mathbb N}\,(A_j'\cup\check{A}_j).\]
The sets $A_j'\cup\check{A}_j$, $j\in\mathbb N$, being Borel and forming an increasing sequence with the union $\tilde{A}$, we conclude from what has been proved above (see Step~1) that
\[\mu^{A_j'\cup\check{A}_j}\to\mu^{\tilde{A}}\text{ \ strongly and vaguely},\]
which in view of (\ref{contt22}) and (\ref{contt11}) establishes (\ref{cont1}) for $A$ arbitrary.

By monotonicity property (\ref{mon}), the potentials $\kappa\mu^{A_j}$ increase pointwise on $X$ (as $j$ ranges over $\mathbb N$) and do not exceed $\kappa\mu^A$, whence
\[\lim_{j\to\infty}\,\kappa\mu^{A_j}\leqslant\kappa\mu^A\text{ \ on $X$}.\]
The opposite being concluded from (\ref{cont1}) by the principle of descent, (\ref{cont2}) follows.
This completes the proof of assertion (a).\smallskip

{\bf Step~3.} Observe that all the sets $A_j'$ and $\check{A}_j$ (see Step~2) are independent of the choice of $\mu\in\mathcal E^+$ (cf.\ Theorem~\ref{cor-m}).
Therefore, applying Theorem~\ref{l-outer} to the (Borel) sets $B_j:=A_j'\cup\check{A}_j$, $j\in\mathbb N$, we derive (b) from relations (\ref{cont1}), (\ref{cont2}), and (\ref{contt22}).
\end{proof}

\section{An open question}\label{sec-open}

In view of the results thus obtained, the following open question naturally arises (compare with the theory of inner Newtonian and Riesz balayage, developed in \cite{Ca2} and \cite{Z-bal,Zor21}, respectively):

\begin{problem} What kind of additional assumptions on a locally compact space\/ $X$ and a kernel\/ $\kappa$ would make it possible to generalize the theory of inner balayage, developed in\/ {\rm\cite{Z-arx1}} and the present paper, to Radon measures on\/ $X$ of infinite energy?\end{problem}

\section{Acknowledgements} The author thanks Krzysztof Bogdan for helpful discussions around the paper~\cite{Bogdan}, and Bent Fuglede for valuable comments on the manuscript.


\begin{thebibliography}{99}

\bibitem{AG}Armitage, D.H., Gardiner, S.J.: Classical Potential Theory. Springer, Berlin (2001)

\bibitem{BH} Bliedtner, J., Hansen, W.: Potential Theory. An Analytic and Probabilistic Approach to Balayage. Springer, Berlin (1986)

\bibitem{BBC} Boboc, N., Bucur, C., Cornea, A.: Order and Convexity in Potential Theory: $H$-Cones. Lecture Notes in Math. 853, Springer,
Berlin (1981)

\bibitem{Bogdan} Bogdan, K., Jakubowski, T.: Estimates of the Green function for the fractional Laplacian perturbed by gradient. Potential Anal. {\bf 36}, 455--481 (2012)

\bibitem{B1}
Bourbaki, N.: General Topology. Chapters~1--4. Springer, Berlin (1989)

\bibitem{B3}
Bourbaki, N.: General Topology. Chapters~5--10. Springer, Berlin (1989)

\bibitem{B4} Bourbaki, N.: Topological Vector Spaces. Chapters~1--5.
Springer, Berlin (2003)

\bibitem{B2} Bourbaki, N.: Integration. Chapters~1--6.
Springer, Berlin (2004)

\bibitem{C0} Cartan, H.: Sur les fondements de la th\'eorie du potentiel. Bull. Soc. Math. France {\bf 69}, 71--96 (1941)

\bibitem{Ca2} Cartan, H.: Th\'eorie g\'en\'erale du balayage en potentiel newtonien. Ann. Univ. Fourier Grenoble {\bf 22}, 221--280 (1946)

\bibitem{Doob} Doob, J.L.: Classical Potential Theory and Its Probabilistic Counterpart. Springer, Berlin (1984)

\bibitem{DFHSZ2} Dragnev, P.D., Fuglede, B., Hardin, D.P., Saff, E.B., Zorii, N.: Condensers with touching
plates and constrained minimum Riesz and Green energy problems. Constr. Approx. {\bf 50}, 369--401 (2019)

\bibitem{Dr0} Dragnev, P.D., Orive, R., Saff, E.B., Wielonsky F.: Riesz energy problems with external fields and related theory. arXiv:2104.03733v2
 (2021)

\bibitem{E} Edwards, R.E.: Cartan's balayage theory for hyperbolic Riemann surfaces. Ann. Inst. Fourier {\bf 8}, 263--272 (1958)

\bibitem{E2}
Edwards, R.E.: Functional Analysis. Theory and Applications. Holt,
Rinehart and Winston, New York (1965)

\bibitem{F1} Fuglede, B.: On the theory of potentials in locally compact spaces. Acta Math. {\bf 103}, 139--215  (1960)

\bibitem{Fu5} Fuglede, B.: Symmetric function kernels and sweeping of measures. Anal. Math.
{\bf 42}, 225--259 (2016)

\bibitem{FZ} Fuglede, B., Zorii, N.: Green kernels associated with Riesz kernels. Ann. Acad. Sci. Fenn. Math. {\bf 43}, 121--145 (2018)

\bibitem{FZ-Pot2} Fuglede, B., Zorii, N.: Various concepts of Riesz energy of measures and application to condensers with touching plates. Potential Anal. {\bf 53}, 1191--1223 (2020)

\bibitem{HWZ}
Harbrecht, H., Wendland, W.L., Zorii, N.: Riesz minimal
energy problems on $C^{k-1,k}$-manifolds.  Math.\ Nachr.\ {\bf 287}, 48--69 (2014)

\bibitem{L} Landkof, N.S.: Foundations of Modern Potential Theory. Springer, Berlin (1972)

\bibitem{ST} Saff, E.B., Totik, V.: Logarithmic Potentials with External Fields. Springer, Berlin (1997)

\bibitem{S} Schwartz, L.: Th\'eorie des Distributions. Hermann, Paris (1997)

\bibitem{Z-Pot} Zorii, N.: Interior capacities of condensers in
locally compact spaces. Potential Anal. {\bf 35}, 103--143 (2011)

\bibitem{Z9} Zorii, N.:  Constrained energy problems with external fields for vector measures.  Math. Nachr. {\bf 285}, 1144--1165 (2012)

\bibitem{ZPot3} Zorii, N.:  Necessary and sufficient conditions for the solvability of the Gauss
variational problem for infinite dimensional vector measures. Potential Anal.
{\bf 41}, 81--115 (2014)

\bibitem{Z-bal} Zorii, N.: A theory of inner Riesz balayage and its applications. Bull. Pol. Acad. Sci. Math. {\bf 68}, 41--67 (2020)

\bibitem{Z-AMP} Zorii, N.: A concept of weak Riesz energy with application to condensers with touching plates. Anal. Math. Phys. (2020). https://doi.org/10.1007/s13324-020-00384-1

\bibitem{Z-arx1} Zorii, N.: Balayage of measures on a locally compact space. Anal. Math. (to appear). arXiv:2010.07199 (2020)

\bibitem{Zor21} Zorii, N.: Harmonic measure, equilibrium measure, and thinness at infinity in the theory of Riesz potentials. Potential Anal. (2021). https://doi.org/10.1007/s11118-021-09923-2

\end{thebibliography}
\end{document}